\documentclass{article}
\usepackage{amsfonts}

\usepackage{amsmath}
\usepackage{afterpage}
\usepackage[T2A]{fontenc}
\usepackage{indentfirst}
\usepackage{graphicx}
\usepackage{xcolor}


\def\qed{\hbox to 0 pt{}\hfill$\rlap{$\sqcap$}\sqcup$}
\newtheorem{theorem}{Theorem}

\newtheorem{corollary}[theorem]{Corollary}

\newtheorem{definition}[theorem]{Definition}
\newtheorem{example}[theorem]{Example}

\newtheorem{lemma}[theorem]{Lemma}

\newtheorem{remark}[theorem]{Remark}

\newenvironment{proof}[1][Proof]{\textbf{#1.} }{\ \rule{0.5 em}{0.5 em}}

\begin{document}

\title{\textbf{{\Large {Green's Functions and Existence of Solutions of Nonlinear Fractional Implicit Difference Equations with Dirichlet Boundary Conditions}}}}
\author{{Alberto Cabada$^{a,b}$, Nikolay D. Dimitrov$^c$ and Jagan Mohan Jonnalagadda$^d$%
} \\
$^a$CITMAga, 15782 Santiago de Compostela, Galicia, Spain.\\ 
$^b$ Departamento de Estat\'{\i}stica, An\'alise Matem\'atica e Optimizaci\'on, \\
	Facultade de Matem\'aticas,
	Universidade de Santiago de Compostela, \\Galicia, Spain, \, alberto.cabada@usc.es \\
$^c$Department of Mathematics, University of Ruse, 7017 Ruse,\\
Bulgaria; \, ndimitrov@uni-ruse.bg\\
$^d$ Department of Mathematics, Birla Institute of Technology\\
and Science Pilani, Hyderabad - 500078, Telangana, India;\\
\, {j.jaganmohan@hotmail.com}}
\maketitle


\begin{abstract}
This article is devoted to deduce the expression  of
the Green's function related to a general constant coefficients fractional
difference equation coupled to Dirichlet conditions. In this case, due to the points where some of the fractional operators is applied, we are in presence of a implicit fractional difference equation. Such property makes it more complicated to calculate and manage the expression of the Green's function. Such expression, on the contrary to the explicit case where it follows from finite sums, is deduced from series of infinity terms. Such expression will be deduced from the Laplace transform on the time scales of the integers. Finally, we prove two existence results for nonlinear problems, via suitable fixed point theorems.
\end{abstract}


\vskip 0.25 in

\vskip 0.25 in

\noindent\textbf{Key Words:} Fractional difference, Dirichlet conditions,
Green's function, existence of solutions.

\vskip 0.25 in

\noindent\textbf{AMS Classification:} 39A12, 39A70.

\vskip 0.25 in

\section{Introduction and Preliminaries}

Considering integrals and derivatives of arbitrary orders allows modeling
many real phenomena in which the value that the solution takes at a given
instant depends on the value of the solution in all the previous moments of
the process. Thus, fractional calculus became very useful in several fields
as viscoelasticity, neurology and control theory \cite{heymans, kilbas,
magin, podlubny, samko, west}. During the last decades, a lot of authors
studied fractional difference equations and there has been a progress made
in developing the basic theory in this field. We refer to the reader the
monographs \cite{goodrich2,miller} for more details. We use the standard
notation $\mathbb{N}_{a}=\left\{ a,a+1,a+2,\dotsc \right\} $ for $a\in 
\mathbb{R}$, and $[c,c+n_0]_{\mathbb{N}_{c}}=[c ,c+n_0] \cap \mathbb{N}_{c}$%
, for $c\in \mathbb{R}$ and $n_0 \in {\mathbb{N}_{1}}$.

In \cite{atici3} Atici and Eloe proved that for all $(t,s) \in
[\upsilon-2,\upsilon+b+1]_{\mathbb{N}_{\upsilon-2}} \times [0,b+1]_{\mathbb{N%
}_{0}}$, the following function 
\begin{equation*}
G_{0}(t,s)=\frac{1}{\Gamma \left( \upsilon \right) }\left\{ 
\begin{array}{ll}
\frac{t^{\left( \upsilon -1\right) }\left( \upsilon +b-s\right) ^{\left(
\upsilon -1\right) }}{\left( \upsilon +b+1\right) ^{\left( \upsilon
-1\right) }}-\left( t-s-1\right) ^{\left( \upsilon -1\right) }, & s<
t-\upsilon +1, \\ 
\frac{t^{\left( \upsilon -1\right) }\left( \upsilon +b-s\right) ^{\left(
\upsilon -1\right) }}{\left( \upsilon +b+1\right) ^{\left( \upsilon
-1\right) }}, & t-\upsilon +1\leq s,
\end{array}
\right.
\end{equation*}
is the related Green function to the Dirichlet problem \vskip -10pt 
\begin{equation*}
\begin{split}
-\Delta ^{\upsilon }y\left( t\right) & =h( t+\upsilon -1) , \; t \in
[0,b+1]_{\mathbb{N}_{0}}, \\
y\left( \upsilon -2\right) & =y\left( \upsilon +b+1\right) =0,
\end{split}
\end{equation*}
with $\upsilon \in \mathbb{R}$, $1<\upsilon <2$ and $b\in \mathbb{N}$.
Moreover, they proved that $G_0(t,s) >0$ for all $(t,s) \in
[\upsilon-1,\upsilon+b]_{\mathbb{N}_{\upsilon-1}} \times [0,b+1]_{\mathbb{N}%
_{0}}$.

Using previous expression and constructing Green's function as a series of
functions, in \cite{CD}, by using the spectral theory is ensured, for a
suitable range of values of the nonconstant function $a(t)$, the
positiveness of the Green's function related to the following Dirichlet
problem 
\begin{equation*}
\begin{split}
-\Delta ^{\upsilon }y\left( t\right) +a\left( t+\upsilon-1\right) y\left(
t+\upsilon-1\right) & =h\left( t+\upsilon -1\right), \\
y\left( \upsilon -2\right) & = y\left( \upsilon +b+1\right) =0,
\end{split}
\label{mainprob}
\end{equation*}
\vskip -3pt \noindent for $t\in [0,b+1]_{\mathbb{N}_{0}}$, where $\upsilon
\in \mathbb{R}$ with $1<\upsilon <2$ and $b\in \mathbb{N}$, $b \ge 5$.

A similar approach has been done in \cite{BCW} for the following problem with mixed conditions:
\begin{equation*}
	\begin{split}
		-\Delta ^{\upsilon }y\left( t\right) +a\left( t+\upsilon-1\right) y\left(
		t+\upsilon-1\right) & =h\left( t+\upsilon -1\right), \\
		y\left( \upsilon -2\right) & =\Delta^\beta y\left( \upsilon +b+1-\beta\right) =0,
	\end{split}
\end{equation*}
with $1<\upsilon \le 2$ and $0\le \beta \le 1$.

Using another approach in \cite{atici, atici2, AE} the general expression of
several linear $n$-th order initial value problems is obtained. They use $%
R_{0}(f(t))(s)$ the Laplace transform on the time scale of integers \cite
{bohner, donahue}, which is defined by the following expression: 
\begin{equation*}
R_{t_{0}}(f(t))(s)=\sum_{t=t_{0}}^{\infty}\left(\frac{1}{s+1}\right)^{t+1}
f(t) .
\end{equation*}

Recently, in \cite{CDJ} the authors considered the problem 
\begin{eqnarray}
- \Delta^{\upsilon}y(t) + \alpha \Delta^{\mu}y(t + \upsilon - \mu - 1) &=&
h(t + \upsilon - 1),  \label{LDE} \\
y\left( \upsilon -2\right) & =& y\left( \upsilon +b+1\right) =0,
\label{e-Dirichlet}
\end{eqnarray}
for $t \in I \equiv [0,b+1]_{\mathbb{N}_{0}}$, where $\mu$, $\upsilon \in 
\mathbb{R}$ such that $0 < \mu < 1$ and $1 < \upsilon < 2$; $%
\Delta^{\upsilon}$ and $\Delta^{\mu}$ are the standard $\upsilon$-th and $\mu
$-th order Riemann--Liouville fractional difference operators, respectively; 
$\alpha $ is a real constant and $h : I \rightarrow \mathbb{R}$.

By using the Laplace transform $R_{0}(f(t))(s)$ they obtained the general
expression of equation \eqref{LDE} and deduced the explicit expression of the
Green's function related to problem \eqref{LDE}--\eqref{e-Dirichlet}. It was
proven that such Green's function has some symmetric properties and is
positive on $[\upsilon-1,\upsilon+b]_{\mathbb{N}_{\upsilon-1}} \times
[0,b+1]_{\mathbb{N}_{0}}$ for all $\alpha \ge 0$ and $\upsilon - \mu - 1 > 0$%
, which improved the results given in \cite{atici3}. Moreover, the authors
deduced some strong positiveness conditions on the Green's function that
allow them to construct suitable cones where to deduce the existence of
solutions of related nonlinear problems. We point out that, in such case, the fact that the fractional operator $\Delta^\mu$  is defined on the points $t+\upsilon-\mu-1$ gives us a explicit equation. Such property gives us the expression of the Green's function as a combination of finite sums.

The aim of this paper is to continue our work in this direction as we
consider the following equation 
\begin{equation}  \label{LDE 1}
- \Delta^{\upsilon}y(t) + \alpha \Delta^{\mu}y(t + \upsilon - \mu) = h(t +
\upsilon - 1),\quad t \in I \equiv \left\{0, 1, \dotsc, b + 1\right\},
\end{equation}
coupled to the boundary conditions \eqref{e-Dirichlet}. 

Here $\mu$, $%
\upsilon \in \mathbb{R}$ such that $0 < \mu < 1$ and $1 < \upsilon < 2$; $%
\Delta^{\upsilon}$ and $\Delta^{\mu}$ are the standard $\upsilon$-th and $\mu
$-th order Riemann--Liouville fractional difference operators, respectively; 
$\alpha $ is a constant and $h : I \rightarrow \mathbb{R}$. We point out that even if we use Laplace transform $R_{0}(f(t))(s)$
to equation \eqref{LDE 1}, we deduce that the sums are not finite as ones
given in \cite{CDJ}. As a result, we study the convergence of the series and
we apply some fixed point to deduce some existence results for a related non
linear problem. We remark that problem \eqref{LDE 1} coupled to the Dirichlet conditions \eqref{e-Dirichlet} has been studied in \cite{henderson} for the particular case of $\alpha=0$.

The paper is organized as follows: After an introduction where we compile the main concepts and properties the we will use along the paper, we obtain, in Section 2, the expression of the Green's function related to problem \eqref{LDE}--\eqref{e-Dirichlet} for $\left|\alpha\right|<1$ (which is the condition that characterizes the convergence of the used series). In Section 3 we deduce two existence results for nonlinear problems. Such existence results follow from the expression of the Green's function by constructing an operator whose fixed points coincides with the solutions of the problems that we are looking for. We finalize the paper with two examples that point out the applicability of the obtained existence results. 

First we recall some basic definitions and lemmas, which will be used till
the end of this work.

\begin{definition}
We define $t^{\left( \upsilon \right) }=\frac{\Gamma \left( t+1\right) }{
\Gamma \left( t+1-\upsilon \right) }$, for any $t$ and $\upsilon $ for which
the right-hand side is well defined. We also appeal to the convention that
if $t+1-\upsilon $ is a pole of the Gamma function and $t+1$ is not a pole,
then $t^{\left( \upsilon \right) }=0$.
\end{definition}

\begin{definition}
The $\upsilon $-th fractional sum of a function $f$, for $\upsilon >0$ and $%
t\in \mathbb{N}_{a+\upsilon }$, is defined as \vskip -12pt 
\begin{equation*}
\Delta ^{-\upsilon }f(t)=\Delta ^{-\upsilon }f\left( t;a\right) :=\frac{1}{
\Gamma \left( \upsilon \right) }\sum_{s=a}^{t-\upsilon }\left( t-s-1\right)
^{\left( \upsilon -1\right) }f(s).
\end{equation*}

We also define the $\upsilon $-th fractional difference for $\upsilon >0$ by 
$\Delta ^{\upsilon }f(t):=\Delta ^{N}\Delta ^{\upsilon -N}f(t),$ where $t\in 
\mathbb{N}_{a+\upsilon }$ and $N\in \mathbb{N}$ is chosen so that $0\leq
N-1<\upsilon \leq N$.
\end{definition}

\begin{lemma}
\textrm{(\cite[Lemma 2.3]{goodrich})} Let $t$ and $\upsilon $ be any numbers
for which $t^{\left( \upsilon \right) } $ and $t^{\left( \upsilon -1\right) }
$ be defined. Then $\Delta t^{\left( \upsilon \right) }=\upsilon t^{\left(
\upsilon -1\right) }$.
\end{lemma}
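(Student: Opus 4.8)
The plan is to compute the forward difference directly from the definition and collapse it using the functional equation of the Gamma function. By definition of the forward difference operator, $\Delta t^{(\upsilon)} = (t+1)^{(\upsilon)} - t^{(\upsilon)}$, so the first step is to write both terms explicitly via $t^{(\upsilon)} = \Gamma(t+1)/\Gamma(t+1-\upsilon)$, giving
\begin{equation*}
\Delta t^{(\upsilon)} = \frac{\Gamma(t+2)}{\Gamma(t+2-\upsilon)} - \frac{\Gamma(t+1)}{\Gamma(t+1-\upsilon)}.
\end{equation*}

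Next I would apply the recurrence $\Gamma(z+1) = z\,\Gamma(z)$ to the first term in both numerator and denominator, rewriting $\Gamma(t+2) = (t+1)\Gamma(t+1)$ and $\Gamma(t+2-\upsilon) = (t+1-\upsilon)\Gamma(t+1-\upsilon)$. This exposes the common factor $\Gamma(t+1)/\Gamma(t+1-\upsilon)$, which I would pull out to obtain
\begin{equation*}
\Delta t^{(\upsilon)} = \frac{\Gamma(t+1)}{\Gamma(t+1-\upsilon)}\left[\frac{t+1}{t+1-\upsilon} - 1\right] = \frac{\Gamma(t+1)}{\Gamma(t+1-\upsilon)}\cdot\frac{\upsilon}{t+1-\upsilon}.
\end{equation*}
Reabsorbing the factor $(t+1-\upsilon)$ into the Gamma function in the denominator via the same recurrence yields $\Delta t^{(\upsilon)} = \upsilon\,\Gamma(t+1)/\Gamma(t+2-\upsilon)$. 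The final step is simply to recognize $\Gamma(t+1)/\Gamma(t+2-\upsilon)$ as $t^{(\upsilon-1)}$, since $t+1-(\upsilon-1) = t+2-\upsilon$, which gives the claimed identity $\Delta t^{(\upsilon)} = \upsilon\,t^{(\upsilon-1)}$.

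The computation itself is routine, so the only real care needed is bookkeeping in the degenerate cases where one of the Gamma arguments is a nonpositive integer (a pole). The hypothesis guarantees that both $t^{(\upsilon)}$ and $t^{(\upsilon-1)}$ are defined, so the generic manipulation above is valid whenever no pole is encountered; the boundary cases covered by the stated convention (where a falling factorial is set to zero because $t+1-\upsilon$ is a pole while $t+1$ is not) would need to be checked separately to confirm both sides still agree, and this is the only place where the argument requires attention rather than plain algebra.
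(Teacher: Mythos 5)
Your computation is correct, and it is the standard argument: the paper itself states this lemma without proof, merely citing Goodrich's Lemma 2.3, and the proof in that reference is essentially the same direct manipulation you give — expand $\Delta t^{(\upsilon)}=(t+1)^{(\upsilon)}-t^{(\upsilon)}$ via $t^{(\upsilon)}=\Gamma(t+1)/\Gamma(t+1-\upsilon)$, apply the recurrence $\Gamma(z+1)=z\,\Gamma(z)$, and factor. Your closing remark about the degenerate pole cases (where the convention $t^{(\upsilon)}=0$ applies) is the right caution and is indeed the only point requiring care beyond the algebra.
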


\begin{lemma}
\textrm{(\cite[Lemma 2.1]{AE})} 
\begin{equation} \label{lemma2.1_4}
R_{\upsilon -1}\left( t^{\left( \upsilon -1\right) }\right) \left( s\right) =%
\frac{\Gamma \left( \upsilon \right) }{s^{\upsilon }}.
\end{equation}
\end{lemma}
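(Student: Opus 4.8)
The plan is to evaluate the series directly from the definition of the Laplace transform on the integers and to recognize the resulting power series as a generalized binomial expansion. First I would unfold the definition, noting that the summation index runs over $t\in\mathbb{N}_{\upsilon-1}=\{\upsilon-1,\upsilon,\upsilon+1,\dotsc\}$, to obtain
\[
R_{\upsilon-1}\!\left(t^{\left(\upsilon-1\right)}\right)(s)=\sum_{t=\upsilon-1}^{\infty}\left(\frac{1}{s+1}\right)^{t+1}t^{\left(\upsilon-1\right)}.
\]
The natural move is to reindex by setting $t=\upsilon-1+k$ with $k\in\mathbb{N}_{0}$, turning the sum over the shifted lattice into an ordinary power series in $k$.

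Next I would simplify the falling-factorial term. Using the definition $t^{\left(\upsilon-1\right)}=\Gamma(t+1)/\Gamma(t+2-\upsilon)$ and substituting $t=\upsilon-1+k$ gives $t^{\left(\upsilon-1\right)}=\Gamma(\upsilon+k)/k!$, while the exponent becomes $t+1=\upsilon+k$. Factoring out $(s+1)^{-\upsilon}$ then reduces the transform to
\[
\left(\frac{1}{s+1}\right)^{\upsilon}\sum_{k=0}^{\infty}\frac{\Gamma(\upsilon+k)}{k!}\left(\frac{1}{s+1}\right)^{k}.
\]
The remaining series is, up to the factor $\Gamma(\upsilon)$, precisely the generalized binomial series $\sum_{k=0}^{\infty}\binom{\upsilon+k-1}{k}x^{k}=(1-x)^{-\upsilon}$ evaluated at $x=1/(s+1)$, since $\Gamma(\upsilon+k)/k!=\Gamma(\upsilon)\binom{\upsilon+k-1}{k}$.

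Applying this identity yields $\Gamma(\upsilon)\bigl(1-1/(s+1)\bigr)^{-\upsilon}=\Gamma(\upsilon)\bigl((s+1)/s\bigr)^{\upsilon}$, and multiplying by the prefactor $(s+1)^{-\upsilon}$ collapses everything to $\Gamma(\upsilon)/s^{\upsilon}$, as claimed. The step I expect to be the main obstacle is justifying convergence rather than the formal algebra: the binomial series $(1-x)^{-\upsilon}$ converges only for $|x|<1$, that is, for $|s+1|>1$, so the identity is valid on that region of the $s$-plane and the transform should be understood there (consistent with the convergence condition $|\alpha|<1$ that governs the series used later in the paper). A secondary point worth checking is the pole-convention in the definition of $t^{\left(\upsilon-1\right)}$: since $1<\upsilon<2$ and $k\ge 0$, no pole of the Gamma function is met along the summation range, so the simplification $t^{\left(\upsilon-1\right)}=\Gamma(\upsilon+k)/k!$ holds term by term without exceptional cases.
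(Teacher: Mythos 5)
Your computation is correct, and it supplies something the paper itself does not contain: the lemma is stated with a citation to \cite{AE} (Lemma 2.1 of Atici--Eloe) and no in-paper proof, so there is no internal argument to compare against. Your route --- unfolding the definition of $R_{\upsilon-1}$, reindexing by $t=\upsilon-1+k$ so that $t^{(\upsilon-1)}=\Gamma(\upsilon+k)/k!$, and recognizing the resulting power series as the generalized binomial expansion $\sum_{k\ge 0}\binom{\upsilon+k-1}{k}x^{k}=(1-x)^{-\upsilon}$ at $x=1/(s+1)$ --- is precisely the standard proof of this transform identity, and in substance it is the argument of the cited source, so it serves as a valid self-contained verification. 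Two minor points deserve correction or sharpening. First, your parenthetical linking the region $|s+1|>1$ to the paper's hypothesis $|\alpha|<1$ conflates two unrelated conditions: $|\alpha|<1$ controls convergence of the discrete Mittag-Leffler series in the parameter $\alpha$ (Remark \ref{r-convergence}), whereas $|s+1|>1$ is a constraint on the transform variable $s$; the identity \eqref{lemma2.1_4} holds for $s$ in that region independently of any $\alpha$. Second, since the exponents $t+1=\upsilon+k$ are non-integer, the powers $\left(\frac{1}{s+1}\right)^{t+1}$ require $s+1>0$ (or a fixed branch); taking $s$ real and positive, which is the setting in which these transforms are manipulated throughout the paper, makes both this and the convergence condition $|s+1|>1$ automatic, so your argument goes through without further qualification.
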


\begin{lemma}
\textrm{(\cite[Lemma 2.2]{AE})} If $\mu >0$ and $m-1<\mu <m,$ where $m$
denotes a positive integer and $f$ is defined on $\mathbb{N}_{\mu -m},$ then
\begin{equation} \label{2.2_4}
R_{0}\left( \Delta _{\mu -m}^{\mu }f\right) \left( s\right) =s^{\mu }R_{\mu
-m}\left( f\right) \left( s\right) -\sum_{k=0}^{m-1}s^{m-k-1}\left. \left(
\Delta ^{k}\Delta _{\mu -m}^{-\left( \mu -m\right) }f\right) \right| _{t=0}.
\end{equation}
\end{lemma}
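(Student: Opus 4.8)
The plan is to reduce the fractional identity to the already-understood case of an ordinary (integer-order) difference applied to a fractional sum. Writing $n := m-\mu \in (0,1)$ and setting $g := \Delta^{-(m-\mu)}_{\mu-m} f$ for the $(m-\mu)$-th fractional sum of $f$ based at $\mu-m$ (which, by the domain convention of the fractional-sum definition, lives on $\mathbb{N}_{0}$), the defining relation $\Delta^{\mu}f=\Delta^{m}\Delta^{\mu-m}f$ gives $\Delta^{\mu}_{\mu-m}f=\Delta^{m}g$. Thus the identity splits into two independent pieces: a shift rule for the integer difference $\Delta^{m}$, which produces the boundary terms, and a transform rule for the fractional sum $g$, which produces the factor $s^{\mu}$.

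First I would establish the elementary shift rule $R_{0}(\Delta\phi)(s)=sR_{0}(\phi)(s)-\phi(0)$. This follows by writing out the defining series $R_{0}(\Delta\phi)(s)=\sum_{t=0}^{\infty}(s+1)^{-(t+1)}[\phi(t+1)-\phi(t)]$, reindexing $u=t+1$ in the first part, and collecting terms; the geometric bookkeeping leaves exactly $sR_{0}(\phi)(s)-\phi(0)$. Applying this to $\phi=\Delta^{k}g$ and arguing by induction on $m$ yields
\begin{equation*}
R_{0}(\Delta^{m}g)(s)=s^{m}R_{0}(g)(s)-\sum_{k=0}^{m-1}s^{m-k-1}(\Delta^{k}g)(0),
\end{equation*}
where the boundary terms $(\Delta^{k}g)(0)=\left(\Delta^{k}\Delta^{-(m-\mu)}_{\mu-m}f\right)\big|_{t=0}$ are precisely those appearing in the statement.

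It remains to identify $s^{m}R_{0}(g)(s)$ with $s^{\mu}R_{\mu-m}(f)(s)$, equivalently to prove the fractional-sum transform rule $R_{0}(g)(s)=s^{-(m-\mu)}R_{\mu-m}(f)(s)$. This is the crux. I would reindex the convolution defining $g$ by the shift $\tau=\mu-m+j$ of its summation variable to obtain $g(t)=\frac{1}{\Gamma(n)}\sum_{j=0}^{t}(t-j+n-1)^{(n-1)}f(\mu-m+j)$, substitute this into $R_{0}(g)(s)=\sum_{t=0}^{\infty}(s+1)^{-(t+1)}g(t)$, interchange the two summations, and set $r=t-j$. The inner sum $\sum_{r=0}^{\infty}(s+1)^{-(r+1)}(r+n-1)^{(n-1)}$ is, after a shift of its index, exactly the quantity computed in Lemma \eqref{lemma2.1_4}, so it equals $(s+1)^{n-1}\Gamma(n)/s^{n}$; the remaining $j$-sum reconstitutes $R_{\mu-m}(f)(s)$ up to a compensating factor $(s+1)^{1-n}$. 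The powers $(s+1)^{n-1}$ and $(s+1)^{1-n}$ cancel, leaving $R_{0}(g)(s)=s^{-n}R_{\mu-m}(f)(s)$, and multiplying the displayed formula by the resulting $s^{m}R_{0}(g)(s)=s^{\mu}R_{\mu-m}(f)(s)$ completes the proof.

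The main obstacle is this last step: recognizing the fractional sum as a discrete convolution and legitimately interchanging the order of the two infinite summations. I would justify the interchange by working in a region of $s$ where $|s+1|$ is large enough that all the series involved converge absolutely, so that Fubini's theorem for series applies and the identity may be read as one between the corresponding generating functions. The secondary delicate point is the bookkeeping of the base-point shift $\mu-m$: one must track the powers of $(s+1)$ through each reindexing so that they cancel exactly, which is what makes the clean factor $s^{\mu}$ emerge.
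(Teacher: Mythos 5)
Your proof is correct. Note first that the paper itself does not prove this lemma: it is quoted from Atici--Eloe \cite{AE} and used as a black box, so there is no in-paper argument to compare against; your argument is a valid self-contained derivation. The two-step decomposition is sound and complete: the reduction $\Delta^{\mu}_{\mu-m}f=\Delta^{m}g$ with $g=\Delta^{-(m-\mu)}_{\mu-m}f$ defined on $\mathbb{N}_{0}$ is exactly what the definition of the fractional difference gives (with $N=m$ since $m-1<\mu<m$); the induction on the first-order shift rule correctly produces $s^{m}R_{0}(g)(s)-\sum_{k=0}^{m-1}s^{m-k-1}\left(\Delta^{k}g\right)(0)$; and your interchange-of-summation computation of $R_{0}(g)(s)=s^{-(m-\mu)}R_{\mu-m}(f)(s)$ checks out, including the cancellation of $(s+1)^{n-1}$ from the inner sum against the factor $(s+1)^{1-n}$ arising when the $j$-sum is rewritten as $R_{\mu-m}(f)(s)$. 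Two remarks. First, your crux step is essentially a hand-rolled special case of the convolution theorem \eqref{lemma2.4_4}--\eqref{eq2.4_4} that the paper also quotes; one could instead write $g$ as a convolution with $t^{(n-1)}$ and invoke that lemma together with \eqref{lemma2.1_4}, at the cost of base-point bookkeeping (the convolution lemma is stated at base $\upsilon-2$), so doing it directly as you did is a reasonable choice and no longer. Second, you silently (and correctly) read the operator in the boundary terms as the $(m-\mu)$-th fractional sum $\Delta^{\mu-m}_{\mu-m}=\Delta^{-(m-\mu)}_{\mu-m}$; the exponent $-(\mu-m)$ printed in the statement is a sign typo, as confirmed by how the paper applies the lemma in \eqref{1} and \eqref{A}, where $m=2$ and the boundary terms are $\left[\Delta^{\upsilon-2}y\right]_{t=0}$ and $\left[\Delta^{\upsilon-1}y\right]_{t=0}$. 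The only hypothesis you should state explicitly is that $f$ is of exponential order, so that all the series converge absolutely for $|s+1|$ large and Fubini's theorem applies; this assumption is implicit throughout Atici--Eloe's transform calculus and in the paper's use of it.
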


\begin{lemma}
\textrm{(\cite[Lemma 2.4]{AE})} 
\begin{equation} \label{lemma2.4_4}
R_{\upsilon -2}\left( f\ast _{\upsilon -2}g\right) \left( s\right) =\left(
s+1\right) ^{\upsilon -1}R_{\upsilon -2}\left( f\right) \left( s\right)
R_{\upsilon -2}\left( g\right) \left( s\right), 
\end{equation}
\end{lemma}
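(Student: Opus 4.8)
The plan is to prove the convolution identity directly from the series definition of the transform $R_{\upsilon-2}$, by recognizing the product $R_{\upsilon-2}(f)(s)\,R_{\upsilon-2}(g)(s)$ as a discrete Cauchy product and then matching it term-by-term against the series defining $R_{\upsilon-2}(f\ast_{\upsilon-2}g)$. To keep the bookkeeping light I would write $a=\upsilon-2$ and $\rho=(s+1)^{-1}$, start from $R_a(f)(s)=\sum_{t=a}^{\infty}\rho^{\,t+1}f(t)$ together with the analogous expression for $g$, and form the product as the double series $\sum_{t\ge a}\sum_{u\ge a}\rho^{\,t+u+2}f(t)g(u)$.

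The key manipulation is to collect terms of equal total index. Setting $n=t+u$, for each fixed $n\ge 2a$ the inner variable $t$ ranges over $a\le t\le n-a$ with $u=n-t$, so the double series becomes $\sum_{n=2a}^{\infty}\rho^{\,n+2}\sum_{t=a}^{n-a}f(t)g(n-t)$. Multiplying by the prefactor $(s+1)^{\upsilon-1}=\rho^{-(a+1)}$ turns the weight $\rho^{\,n+2}$ into $\rho^{\,n+1-a}$, and the change of index $m=n-a$ then produces $\sum_{m=a}^{\infty}\rho^{\,m+1}\sum_{t=a}^{m}f(t)g(m+a-t)$. The inner sum is precisely the base-$a$ fractional convolution $(f\ast_a g)(m)=\sum_{t=a}^{m}f(t)g(m+a-t)$, so the whole expression collapses to $\sum_{m=a}^{\infty}\rho^{\,m+1}(f\ast_a g)(m)=R_a(f\ast_a g)(s)$. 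With $a=\upsilon-2$ this is exactly the stated identity, and the role of the factor $(s+1)^{\upsilon-1}$ becomes transparent: it absorbs the shift $m=n-a$ that re-bases the convolution at $a$ rather than at $2a$.

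The main obstacle I anticipate is not the algebra but the justification of interchanging the order of summation when passing from the double series to the diagonal sum over $n$. This is legitimate provided the iterated series converges absolutely, which holds in the region $|s+1|>1$ (equivalently $|\rho|<1$) under mild growth control on $f$ and $g$ — precisely the regime in which $R_a$ is defined as a convergent series, and the same convergence considerations flagged in the introduction (the $|\alpha|<1$ threshold) reappear here. Once absolute convergence is secured, Fubini for series validates the reindexing, and the only remaining care is tracking the two successive index shifts ($n=t+u$ and $m=n-a$) so that the exponent of $(s+1)$ emerges as exactly $\upsilon-1$.
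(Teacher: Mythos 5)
Your argument is correct, and it checks out line by line: the Cauchy-product regrouping along the diagonals $n=t+u$, the absorption of $(s+1)^{\upsilon-1}=\rho^{-(a+1)}$ into the weight $\rho^{\,n+2}$, and the re-basing shift $m=n-a$ together yield exactly $R_a(f\ast_a g)(s)$ with $a=\upsilon-2$. Note, however, that there is no in-paper proof to compare against: the paper imports this identity verbatim as \cite[Lemma 2.4]{AE}, so your computation is a self-contained replacement for that citation rather than an alternative to an argument given in the text; it is also the standard proof of the discrete convolution theorem (essentially the interchange-of-summation computation of the cited source, run in the opposite direction). Two small points deserve attention. First, your inner sum $\sum_{t=a}^{m}f(t)\,g(m+a-t)$ agrees with the paper's definition \eqref{eq2.4_4}, namely $(f\ast_a g)(m)=\sum_{s=a}^{m}f(m-s+a)\,g(s)$, only after the index involution $s=m+a-t$ of $\{a,a+1,\dots,m\}$; since the two expressions place $f$ and $g$ in swapped positions, this one-line substitution should be stated explicitly. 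Second, your appeal to the $|\alpha|<1$ threshold is a red herring: that condition governs convergence in $k$ of the Mittag-Leffler series and has nothing to do with the transform variable $s$; what your Fubini step actually requires is absolute convergence of $R_a(f)(s)$ and $R_a(g)(s)$ for $|s+1|$ sufficiently large (guaranteed when $f$ and $g$ are of exponential order, and automatic in the paper's application, where $h$ has finite support on $[\upsilon-1,\upsilon+b]_{\mathbb{N}_{\upsilon-1}}$). With that corrected justification, the proof is complete.
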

where 
\begin{equation} \label{eq2.4_4}
f\ast _{\upsilon -2}g\left( t\right) =\sum_{s=\upsilon -2}^{t}f\left(
t-s+\upsilon -2\right) g\left( s\right) 
\end{equation}
is the convolution product of two functions defined on $\mathbb{N}_{\upsilon -2}.$

\begin{definition}
The two parameter delta discrete Mittag-Leffler function is defined by 
\begin{equation*}
e_{\alpha, \beta}(\lambda, t - a) = \sum^{\infty}_{k = 0}\lambda^{k} \frac{%
(t - a + k \alpha + \beta - 1)^{(k\alpha + \beta - 1)}}{\Gamma(k \alpha +
\beta)},
\end{equation*}
for $\alpha > 0$, $\beta \in \mathbb{R}$ and $t \in \mathbb{N}_{a}$.
\end{definition}
Observe that, 
\begin{equation*}
e_{\alpha, \beta}(\lambda, -1) = \sum^{\infty}_{k = 0}\lambda^{k} \frac{(k
\alpha + \beta - 2)^{(k\alpha + \beta - 1)}}{\Gamma(k \alpha + \beta)} = 0.
\end{equation*}
Clearly, if $|\lambda| < 1$, then $e_{\alpha, \beta}(\lambda, 0) = \frac{1}{%
1 - \lambda}$. Also, 
\begin{equation*}
e_{\alpha, \beta}(\lambda, 1) = \sum^{\infty}_{k = 0}\lambda^{k} \frac{(k
\alpha + \beta)^{(k\alpha + \beta - 1)}}{\Gamma(k \alpha + \beta)} =
\sum^{\infty}_{k = 0}\lambda^{k} (k \alpha + \beta) = \frac{\alpha \lambda}{%
(1 - \lambda)^2} + \frac{\beta}{(1 - \lambda)}.
\end{equation*}
\begin{remark}
	\label{r-convergence}
Using D'Alembert's Ratio test, one can easily check that the above function converges for all
$|\lambda|<1$ and diverges for $|\lambda|>1$. \cite[Theorem 6]{shobanadevi}.
\end{remark} 

\section{Construction of the Green's Function}

In this section we will construct the Green's function related to Problem %
\eqref{LDE 1} - \eqref{e-Dirichlet}, following the approach given in \cite{CDJ}. To this end we use the main properties
of the Laplace transform. Since the delta discrete Mittag-Leffler function will be used, along all the section we assume that $|\alpha|<1$, in order to ensure its convergence by the characterization given in Remark \ref{r-convergence}.

First, from \eqref{2.2_4}, we have
\begin{equation} \label{1}
R_{0}\left[\Delta^{\upsilon}y(t)\right](s) = s^{\upsilon}R_{\upsilon - 2}\left[y(t)\right](s) - s A -  B,
\end{equation}
where $A = \left[\Delta^{\upsilon - 2}y(t)\right]_{t = 0}$ and $B = \left[\Delta^{\upsilon - 1}y(t)\right]_{t = 0}$. One can check that
\begin{equation} \label{A}
A = \frac{1}{\Gamma(2 - \upsilon)}(1 - \upsilon)^{(1 - \upsilon)}y(\upsilon - 2) = y(\upsilon - 2)
\end{equation}
and 
\begin{equation*} 
B  = (1 - \upsilon) y(\upsilon - 2) + y(\upsilon - 1).
\end{equation*}
Denote by $Y_{1}(t) = y(t + \upsilon - \mu)$. Then,
\begin{align} \label{2}
\nonumber R_{\mu - 1}\left[Y_{1}(t)\right](s) & = \sum^{\infty}_{t = \mu - 1}\left(\frac{1}{s + 1}\right)^{t + 1}Y_1(t) \\ \nonumber & = \sum^{\infty}_{t = \upsilon - 1}\left(\frac{1}{s + 1}\right)^{t - \upsilon + \mu + 1}y(t) \\ \nonumber & = (s + 1)^{\upsilon - \mu}\sum^{\infty}_{t = \upsilon - 1}\left(\frac{1}{s + 1}\right)^{t + 1}y(t) \\ \nonumber & = (s + 1)^{\upsilon - \mu} \left[\sum^{\infty}_{t = \upsilon - 2}\left(\frac{1}{s + 1}\right)^{t + 1}y(t) - \left(\frac{1}{s + 1}\right)^{\upsilon - 2 + 1}y(\upsilon - 2)\right] \\ & = (s + 1)^{\upsilon - \mu} R_{\upsilon - 2}\left[y(t)\right](s) - (s + 1)^{1 - \mu}y(\upsilon - 2).
\end{align}
Next, from \eqref{2.2_4}, we have
\begin{equation} \label{3}
R_{0}\left[\Delta^{\mu}y(t)\right](s) = s^{\mu}R_{\mu - 1}\left[y(t)\right](s) - \left[\Delta^{\mu - 1}y(t)\right]_{t = 0}.
\end{equation}

Using \eqref{2} and \eqref{3}, we obtain
\begin{align} \label{4}
\nonumber & R_{0}\left[\Delta^{\mu}Y_1(t)\right](s) \\ \nonumber & = s^{\mu}R_{\mu - 1}\left[Y_1(t)\right](s) - \left[\Delta^{\mu - 1}Y_1(t)\right]_{t = 0} \\ & = s^{\mu}\left[(s + 1)^{\upsilon - \mu} R_{\upsilon - 2}\left[y(t)\right](s) - (s + 1)^{1 - \mu}y(\upsilon - 2)\right] - \left[\Delta^{\mu - 1}Y_1(t)\right]_{t = 0}.
\end{align}
Now, consider
\begin{align}\label{5}
\nonumber  \left[\Delta^{\mu - 1}Y_1(t)\right]_{t = 0} & = \left[\Delta^{-(1 - \mu)}Y_1(t)\right]_{t = 0} \\ \nonumber & = \left[\frac{1}{\Gamma(1 - \mu)}\sum^{t - (1 - \mu)}_{s = \mu - 1}(t - s - 1)^{(1 - \mu - 1)}Y_{1}(s)\right]_{t = 0} \\ \nonumber & = \left[\frac{1}{\Gamma(1 - \mu)}\sum^{t - (1 - \mu)}_{s = \mu - 1}(t - s - 1)^{(1 - \mu - 1)}y(s + \upsilon - \mu)\right]_{t = 0} \\ \nonumber & = \frac{1}{\Gamma(1 - \mu)}(- \mu)^{(- \mu)}y(\upsilon - 1) \\ & = y(\upsilon - 1) = B - (1 - \upsilon)A.
\end{align}
Using \eqref{A} and \eqref{5} in \eqref{4}, we deduce
\begin{align} \label{6}
\nonumber & R_{0}\left[\Delta^{\mu}Y_1(t)\right](s) \\ \nonumber & = s^{\mu}(s + 1)^{\upsilon - \mu} R_{\upsilon - 2}\left[y(t)\right](s) - s^{\mu} (s + 1)^{1 - \mu}A - B + (1 - \upsilon)A \\ & = s^{\mu}(s + 1)^{\upsilon - \mu} R_{\upsilon - 2}\left[y(t)\right](s) + \left[(1 - \upsilon) - s^{\mu} (s + 1)^{1 - \mu}\right]A - B.
\end{align}
Denote $H_{1}(t) = h(t + \upsilon - 1)$. Then,
\begin{align} \label{7}
\nonumber R_{0}\left[H_{1}(t)\right](s) & = \sum^{\infty}_{t = 0}\left(\frac{1}{s + 1}\right)^{t + 1}H_1(t) \\ \nonumber & = \sum^{\infty}_{t = \upsilon - 1}\left(\frac{1}{s + 1}\right)^{t - \upsilon + 1 + 1}h(t) \\ \nonumber & = (s + 1)^{\upsilon - 1}\sum^{\infty}_{t = \upsilon - 1}\left(\frac{1}{s + 1}\right)^{t + 1}h(t) \\ \nonumber & = (s + 1)^{\upsilon - 1}\sum^{\infty}_{t = \upsilon - 2}\left(\frac{1}{s + 1}\right)^{t + 1}h(t) - h(\upsilon - 2) \\ & = (s + 1)^{\upsilon - 1} R_{\upsilon - 2}\left[h(t)\right](s) -  h(\upsilon - 2).
\end{align}
By applying $R_{0}$ to each side of \eqref{LDE 1} and employing \eqref{1}, \eqref{6} and \eqref{7}, we obtain
\begin{multline*}
- \left[s^{\upsilon}R_{\upsilon - 2}\left[y(t)\right](s) - s A -  B\right] \\ + \alpha \left[s^{\mu}(s + 1)^{\upsilon - \mu} R_{\upsilon - 2}\left[y(t)\right](s) + \left[(1 - \upsilon) - s^{\mu} (s + 1)^{1 - \mu}\right]A - B\right] \\ = (s + 1)^{\upsilon - 1} R_{\upsilon - 2}\left[h(t)\right](s) -  h(\upsilon - 2). 
\end{multline*}
Rearranging the terms gives us
\begin{multline*}
\left(s^{\upsilon} - \alpha s^{\mu}(s + 1)^{\upsilon - \mu} \right)R_{\upsilon - 2}\left[y(t)\right](s) \\ = \left(s + \alpha (1 - \upsilon) - \alpha s^{\mu} (s + 1)^{1 - \mu}\right)A + (1 - \alpha)B - (s + 1)^{\upsilon - 1} R_{\upsilon - 2}\left[h(t)\right](s) +  h(\upsilon - 2).
\end{multline*}
This implies that
\begin{multline} \label{Main}
R_{\upsilon - 2}\left[y(t)\right](s) = \frac{\left(s + \alpha (1 - \upsilon) - \alpha s^{\mu} (s + 1)^{1 - \mu}\right)}{\left(s^{\upsilon} - \alpha s^{\mu}(s + 1)^{\upsilon - \mu} \right)}A  + \frac{(1 - \alpha)}{\left(s^{\upsilon} - \alpha s^{\mu}(s + 1)^{\upsilon - \mu} \right)}B \\ - \frac{(s + 1)^{\upsilon - 1}}{\left(s^{\upsilon} - \alpha s^{\mu}(s + 1)^{\upsilon - \mu} \right)} R_{\upsilon - 2}\left[h(t)\right](s)  + \frac{1}{\left(s^{\upsilon} - \alpha s^{\mu}(s + 1)^{\upsilon - \mu} \right)}h(\upsilon - 2).
\end{multline}
Denote $Z(t) = y(t + n(\upsilon - \mu))$. Then,
\begin{align} \label{Z}
\nonumber R_{\upsilon - 2}\left[Z(t)\right](s) & = \sum^{\infty}_{t = \upsilon - 2}\left(\frac{1}{s + 1}\right)^{t + 1}Z(t) \\ \nonumber & = \sum^{\infty}_{t = (n + 1)\upsilon - n \mu - 2}\left(\frac{1}{s + 1}\right)^{t - n\upsilon + n\mu + 1}y(t) \\ \nonumber & = (s + 1)^{n\upsilon - n\mu}\sum^{\infty}_{t = (n + 1)\upsilon - n \mu - 2}\left(\frac{1}{s + 1}\right)^{t + 1}y(t) \\ & = (s + 1)^{n\upsilon - n\mu} R_{(n + 1)\upsilon - n \mu - 2}\left[y(t)\right](s).
\end{align}
Note that using \eqref{lemma2.1_4} and \eqref{Z}, we obtain
\begin{align} \label{One}
\nonumber \frac{s}{s^{\upsilon} - \alpha s^{\mu}(s + 1)^{\upsilon - \mu}} & = \frac{1}{s^{\upsilon - 1}} \frac{1}{\left[1 - \alpha \left(\frac{s + 1}{s}\right)^{\upsilon - \mu}\right]}  \\ \nonumber & = \frac{1}{s^{\upsilon - 1}} \sum^{\infty}_{k = 0}\alpha^k \left(\frac{s + 1}{s}\right)^{k\upsilon - k\mu} \\ \nonumber & = \sum^{\infty}_{k = 0}\alpha^k (s + 1)^{k\upsilon - k\mu} \frac{1}{s^{(k + 1) \upsilon - k \mu - 1}} \\ \nonumber & = \sum^{\infty}_{k = 0}\alpha^k (s + 1)^{k\upsilon - k\mu} \frac{R_{(k + 1) \upsilon - k \mu - 2}\left[t^{((k + 1) \upsilon - k \mu - 2)}\right](s)}{\Gamma((k + 1) \upsilon - k \mu - 1)}  \\ \nonumber & = \sum^{\infty}_{k = 0}\alpha^k \frac{R_{\upsilon - 2}\left[(t + k(\upsilon - \mu))^{((k + 1) \upsilon - k \mu - 2)}\right](s)}{\Gamma((k + 1) \upsilon - k \mu - 1)} \\ \nonumber & = R_{\upsilon - 2}\left[\sum^{\infty}_{k = 0}\alpha^k \frac{(t + k(\upsilon - \mu))^{((k + 1) \upsilon - k \mu - 2)}}{\Gamma((k + 1) \upsilon - k \mu - 1)}\right](s) \\ & =  R_{\upsilon - 2}\left[e_{\upsilon - \mu, \upsilon - 1}(\alpha, t - \upsilon + 2)\right](s).
\end{align}
Similar to \eqref{Z}, we have
\begin{equation} \label{Z A}
R_{\upsilon - 1}\left[Z(t)\right](s) = (s + 1)^{n\upsilon - n\mu} R_{(n + 1)\upsilon - n \mu - 1}\left[y(t)\right](s).
\end{equation}
Moreover, using \eqref{lemma2.1_4} and \eqref{Z A}  we obtain
\begin{align} \label{Two}
\nonumber \frac{1}{s^{\upsilon} - \alpha s^{\mu}(s + 1)^{\upsilon - \mu}} & = \frac{1}{s^{\upsilon}} \sum^{\infty}_{k = 0}\alpha^k \left(\frac{s + 1}{s}\right)^{k\upsilon - k\mu} \\ \nonumber & = \sum^{\infty}_{k = 0}\alpha^k (s + 1)^{k\upsilon - k\mu} \frac{1}{s^{(k + 1) \upsilon - k \mu}} \\ \nonumber & = \sum^{\infty}_{k = 0}\alpha^k (s + 1)^{k\upsilon - k\mu} \frac{R_{(k + 1) \upsilon - k \mu - 1}\left[t^{((k + 1) \upsilon - k \mu - 1)}\right](s)}{\Gamma((k + 1) \upsilon - k \mu)} \\ \nonumber & = \sum^{\infty}_{k = 0}\alpha^k \frac{R_{\upsilon - 1}\left[(t + k(\upsilon - \mu))^{((k + 1) \upsilon - k \mu - 1)}\right](s)}{\Gamma((k + 1) \upsilon - k \mu)}\\ \nonumber & = R_{\upsilon - 1}\left[\sum^{\infty}_{k = 0}\alpha^k \frac{(t + k(\upsilon - \mu))^{((k + 1) \upsilon - k \mu - 1)}}{\Gamma((k + 1) \upsilon - k \mu)}\right](s) \\ \nonumber & = R_{\upsilon - 2}\left[\sum^{\infty}_{k = 0}\alpha^k \frac{(t + k(\upsilon - \mu))^{((k + 1) \upsilon - k \mu - 1)}}{\Gamma((k + 1) \upsilon - k \mu)}\right](s) \\ \nonumber  & \quad - (s + 1)^{1 - \upsilon} \left[\sum^{\infty}_{k = 0}\alpha^k \frac{(t + k(\upsilon - \mu))^{((k + 1) \upsilon - k \mu - 1)}}{\Gamma((k + 1) \upsilon - k \mu)}\right]_{t = \upsilon - 2}  \\ \nonumber & = R_{\upsilon - 2}\left[\sum^{\infty}_{k = 0}\alpha^k \frac{(t + k(\upsilon - \mu))^{((k + 1) \upsilon - k \mu - 1)}}{\Gamma((k + 1) \upsilon - k \mu)}\right](s) \\ & =  R_{\upsilon - 2}\left[e_{\upsilon - \mu, \upsilon}(\alpha, t - \upsilon + 1)\right](s).
\end{align}
Denote $Z_{1}(t) = y(t + n(\upsilon - \mu) - \mu + 1)$. Then,
\begin{align} \label{Z B}
\nonumber R_{\upsilon - 2}\left[Z_1(t)\right](s) & = \sum^{\infty}_{t = \upsilon - 2}\left(\frac{1}{s + 1}\right)^{t + 1}Z_1(t) \\ \nonumber & = \sum^{\infty}_{t = (n + 1)(\upsilon - \mu) - 1}\left(\frac{1}{s + 1}\right)^{t - n(\upsilon - \mu) + \mu - 1 + 1}y(t) \\ \nonumber & = (s + 1)^{n\upsilon - (n + 1)\mu + 1}\sum^{\infty}_{t = (n + 1)(\upsilon - \mu) - 1}\left(\frac{1}{s + 1}\right)^{t + 1}y(t) \\ & = (s + 1)^{n\upsilon - (n + 1)\mu + 1} R_{(n + 1)(\upsilon - \mu) - 1}\left[y(t)\right](s).
\end{align}
Note that using \eqref{lemma2.1_4} and \eqref{Z B}, we obtain
\begin{align} \label{Three}
\nonumber & \frac{s^{\mu} (s + 1)^{1 - \mu}}{s^{\upsilon} - \alpha s^{\mu}(s + 1)^{\upsilon - \mu}} \\ \nonumber & = \frac{s^{\mu} (s + 1)^{1 - \mu}}{s^{\upsilon}} \frac{1}{1 - \alpha s^{\mu - \upsilon}(s + 1)^{\upsilon - \mu}}  \\ \nonumber & = \frac{s^{\mu} (s + 1)^{1 - \mu}}{s^{\upsilon}} \frac{1}{\left[1 - \alpha \left(\frac{s + 1}{s}\right)^{\upsilon - \mu}\right]}  \\ \nonumber & = \frac{s^{\mu} (s + 1)^{1 - \mu}}{s^{\upsilon}} \sum^{\infty}_{k = 0}\alpha^k \left(\frac{s + 1}{s}\right)^{k\upsilon - k\mu} \\ \nonumber & = \sum^{\infty}_{k = 0}\alpha^k (s + 1)^{k\upsilon - (k + 1)\mu + 1} \frac{1}{s^{(k + 1) \upsilon - (k + 1) \mu}} \\ \nonumber & = \sum^{\infty}_{k = 0}\alpha^k (s + 1)^{k\upsilon - (k + 1)\mu + 1} \frac{R_{(k + 1) \upsilon - (k + 1) \mu - 1}\left[t^{((k + 1) \upsilon - (k + 1) \mu - 1)}\right](s)}{\Gamma((k + 1) \upsilon - (k + 1) \mu)}  \\ \nonumber & = \sum^{\infty}_{k = 0}\alpha^k \frac{R_{\upsilon - 2}\left[(t + k(\upsilon - \mu) - \mu + 1)^{((k + 1) \upsilon - k \mu - 1)}\right](s)}{\Gamma((k + 1) \upsilon - k \mu)} \\ \nonumber & = R_{\upsilon - 2}\left[\sum^{\infty}_{k = 0}\alpha^k \frac{(t + k(\upsilon - \mu) - \mu + 1)^{((k + 1) \upsilon - k \mu - 1)}}{\Gamma((k + 1) \upsilon - k \mu)}\right](s) \\ & =  R_{\upsilon - 2}\left[e_{\upsilon - \mu, \upsilon - \mu}(\alpha, t - \upsilon + 2)\right](s).
\end{align}
Using \eqref{One}, \eqref{Two} and \eqref{Three} in \eqref{Main}, we deduce
\begin{multline*} 
R_{\upsilon - 2}\left[y(t)\right](s) = \Big{[}R_{\upsilon - 2}\left[e_{\upsilon - \mu, \upsilon - 1}(\alpha, t - \upsilon + 2)\right](s) \\ + \alpha (1 - \upsilon)R_{\upsilon - 2}\left[e_{\upsilon - \mu, \upsilon}(\alpha, t - \upsilon + 1)\right](s) \\ - \alpha R_{\upsilon - 2}\left[e_{\upsilon - \mu, \upsilon - \mu}(\alpha, t - \upsilon + 2)\right](s)\Big{]} A \\ + (1 - \alpha) R_{\upsilon - 2}\left[e_{\upsilon - \mu, \upsilon}(\alpha, t - \upsilon + 1)\right](s)B \\ - (s + 1)^{\upsilon - 1} R_{\upsilon - 2}\left[e_{\upsilon - \mu, \upsilon}(\alpha, t - \upsilon + 1)\right](s)R_{\upsilon - 2}\left[h(t)\right](s) \\ + R_{\upsilon - 2}\left[e_{\upsilon - \mu, \upsilon}(\alpha, t - \upsilon + 1)\right](s)h(\upsilon - 2),
\end{multline*}
which, by \eqref{lemma2.4_4} can be written as
\begin{multline*} 
R_{\upsilon - 2}\left[y(t)\right](s) = \Big{[}R_{\upsilon - 2}\left[e_{\upsilon - \mu, \upsilon - 1}(\alpha, t - \upsilon + 2)\right](s) \\ + \alpha (1 - \upsilon)R_{\upsilon - 2}\left[e_{\upsilon - \mu, \upsilon}(\alpha, t - \upsilon + 1)\right](s) \\ - \alpha R_{\upsilon - 2}\left[e_{\upsilon - \mu, \upsilon - \mu}(\alpha, t - \upsilon + 2)\right](s)\Big{]} A \\ + (1 - \alpha) R_{\upsilon - 2}\left[e_{\upsilon - \mu, \upsilon}(\alpha, t - \upsilon + 1)\right](s)B \\ - R_{\upsilon - 2}\left[e_{\upsilon - \mu, \upsilon}(\alpha, t - \upsilon + 1) *_{\upsilon  - 2} h\right](s) \\ + R_{\upsilon - 2}\left[e_{\upsilon - \mu, \upsilon}(\alpha, t - \upsilon + 1)\right](s)h(\upsilon - 2).
\end{multline*}
Apply to each side the inverse of $R_{\upsilon - 2}$, we obtain
\begin{multline*} 
y(t) = \Big{[}e_{\upsilon - \mu, \upsilon - 1}(\alpha, t - \upsilon + 2) + \alpha (1 - \upsilon)e_{\upsilon - \mu, \upsilon}(\alpha, t - \upsilon + 1) \\ - \alpha e_{\upsilon - \mu, \upsilon - \mu}(\alpha, t - \upsilon + 2)\Big{]} A + (1 - \alpha) e_{\upsilon - \mu, \upsilon}(\alpha, t - \upsilon + 1)B \\ - e_{\upsilon - \mu, \upsilon}(\alpha, t - \upsilon + 1) *_{\upsilon  - 2} h + e_{\upsilon - \mu, \upsilon}(\alpha, t - \upsilon + 1)h(\upsilon - 2).
\end{multline*}
Thus, using \eqref{eq2.4_4}, we have
\begin{multline*} 
y(t) = \Big{[}e_{\upsilon - \mu, \upsilon - 1}(\alpha, t - \upsilon + 2) + \alpha (1 - \upsilon)e_{\upsilon - \mu, \upsilon}(\alpha, t - \upsilon + 1) \\ - \alpha e_{\upsilon - \mu, \upsilon - \mu}(\alpha, t - \upsilon + 2)\Big{]} A + (1 - \alpha) e_{\upsilon - \mu, \upsilon}(\alpha, t - \upsilon + 1)B \\ - \sum^{t}_{s = \upsilon - 2}e_{\upsilon - \mu, \upsilon}(\alpha, t - s + \upsilon - 2 - \upsilon + 1) h(s)  +  e_{\upsilon - \mu, \upsilon}(\alpha, t - \upsilon + 1)h(\upsilon - 2).
\end{multline*}
That is, 
\begin{multline} \label{Main 5}
y(t) = \Big{[}e_{\upsilon - \mu, \upsilon - 1}(\alpha, t - \upsilon + 2) + \alpha (1 - \upsilon)e_{\upsilon - \mu, \upsilon}(\alpha, t - \upsilon + 1) \\ - \alpha e_{\upsilon - \mu, \upsilon - \mu}(\alpha, t - \upsilon + 2)\Big{]} A + (1 - \alpha) e_{\upsilon - \mu, \upsilon}(\alpha, t - \upsilon + 1)B \\ - \sum^{t}_{s = \upsilon - 1}e_{\upsilon - \mu, \upsilon}(\alpha, t - s - 1) h(s).
\end{multline}
Using $y(\upsilon - 2) = 0$ in \eqref{Main 5}, we have
\begin{multline*} 
0 = \Big{[}e_{\upsilon - \mu, \upsilon - 1}(\alpha, 0) + \alpha (1 - \upsilon)e_{\upsilon - \mu, \upsilon}(\alpha, -1) \\ - \alpha e_{\upsilon - \mu, \upsilon - \mu}(\alpha, 0)\Big{]} A + (1 - \alpha) e_{\upsilon - \mu, \upsilon}(\alpha, -1)B \\ - \sum^{\upsilon - 2}_{s = \upsilon - 1}e_{\upsilon - \mu, \upsilon}(\alpha, t - s - 1) h(s).
\end{multline*}
That is, 
\begin{equation*} 
0 = \left[\frac{1}{(1 - \alpha)} - \frac{\alpha}{(1 - \alpha)}\right] A.
\end{equation*}
Using $y(\upsilon + b + 1) = 0$ in \eqref{Main 5} and taking $A = 0$, we have
\begin{equation*}
0 = (1 - \alpha) e_{\upsilon - \mu, \upsilon}(\alpha, b + 2)B - \sum^{\upsilon + b + 1}_{s = \upsilon - 1}e_{\upsilon - \mu, \upsilon}(\alpha, \upsilon + b - s) h(s),
\end{equation*}
or
\begin{align} \label{B 1}
\nonumber B & = \frac{1}{(1 - \alpha) e_{\upsilon - \mu, \upsilon}(\alpha, b + 2)}\sum^{\upsilon + b + 1}_{s = \upsilon - 1}e_{\upsilon - \mu, \upsilon}(\alpha, \upsilon + b - s) h(s) \\ & = \frac{1}{(1 - \alpha) e_{\upsilon - \mu, \upsilon}(\alpha, b + 2)}\sum^{\upsilon + b}_{s = \upsilon - 1}e_{\upsilon - \mu, \upsilon}(\alpha, \upsilon + b - s) h(s).
\end{align}
Using \eqref{B 1} and $A = 0$ in \eqref{Main 5}, we obtain
\begin{multline*} 
y(t) = (1 - \alpha) e_{\upsilon - \mu, \upsilon}(\alpha, t - \upsilon + 1)\left[\frac{1}{(1 - \alpha) e_{\upsilon - \mu, \upsilon}(\alpha, b + 2)}\sum^{\upsilon + b}_{s = \upsilon - 1}e_{\upsilon - \mu, \upsilon}(\alpha, \upsilon + b - s) h(s)\right] \\ - \sum^{t}_{s = \upsilon - 1}e_{\upsilon - \mu, \upsilon}(\alpha, t - s - 1) h(s).
\end{multline*}
Rearranging the terms, we obtain
\begin{multline*} 
y(t) = \frac{e_{\upsilon - \mu, \upsilon}(\alpha, t - \upsilon + 1)}{e_{\upsilon - \mu, \upsilon}(\alpha, b + 2)}\sum^{\upsilon + b}_{s = \upsilon - 1}e_{\upsilon - \mu, \upsilon}(\alpha, \upsilon + b - s) h(s)\\ - \sum^{t}_{s = \upsilon - 1}e_{\upsilon - \mu, \upsilon}(\alpha, t - s - 1) h(s).
\end{multline*}
That is, 
\begin{multline*} \label{Main 11}
y(t) = \sum^{t}_{s = \upsilon - 1}\left[\frac{e_{\upsilon - \mu, \upsilon}(\alpha, t - \upsilon + 1)}{e_{\upsilon - \mu, \upsilon}(\alpha, b + 2)} e_{\upsilon - \mu, \upsilon}(\alpha, \upsilon + b - s) - e_{\upsilon - \mu, \upsilon}(\alpha, t - s - 1) \right] h(s)\\ - \sum^{\upsilon + b}_{s = t + 1}\left[\frac{e_{\upsilon - \mu, \upsilon}(\alpha, t - \upsilon + 1)}{e_{\upsilon - \mu, \upsilon}(\alpha, b + 2)} e_{\upsilon - \mu, \upsilon}(\alpha, \upsilon + b - s) \right] h(s).
\end{multline*}

\begin{theorem}
	Assuming that $|\alpha|<1$, we have that Problem 
\eqref{LDE 1} - \eqref{e-Dirichlet} has a unique solution if and only if $$e_{\upsilon - \mu, \upsilon}(\alpha, b + 2) \neq 0.$$
\end{theorem}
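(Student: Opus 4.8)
The plan is to read off the conclusion directly from the general representation \eqref{Main 5} obtained above, which expresses any solution of \eqref{LDE 1} as a two–parameter family in the constants $A=y(\upsilon-2)$ and $B=(1-\upsilon)y(\upsilon-2)+y(\upsilon-1)$, plus a forcing term depending linearly on $h$. Since the passage through the Laplace transform $R_{\upsilon-2}$ is invertible and the Mittag--Leffler series converge for $|\alpha|<1$, formula \eqref{Main 5} characterises solutions exactly: a function solves \eqref{LDE 1} if and only if it has that form for some pair $(A,B)$. Consequently, solving the boundary value problem \eqref{LDE 1}--\eqref{e-Dirichlet} reduces to choosing $(A,B)$ so that the two Dirichlet conditions hold, and the problem has a unique solution for every $h$ precisely when the resulting linear $2\times2$ system in $(A,B)$ has nonzero determinant.

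First I would impose $y(\upsilon-2)=0$ by evaluating \eqref{Main 5} at $t=\upsilon-2$. Using the special values $e_{\upsilon-\mu,\beta}(\alpha,-1)=0$ and $e_{\upsilon-\mu,\beta}(\alpha,0)=\tfrac{1}{1-\alpha}$ (the latter valid because $|\alpha|<1$), together with the fact that the sum $\sum_{s=\upsilon-1}^{\upsilon-2}$ is empty, the bracket multiplying $A$ collapses to $\tfrac{1}{1-\alpha}-\tfrac{\alpha}{1-\alpha}=1$ while the coefficient $(1-\alpha)\,e_{\upsilon-\mu,\upsilon}(\alpha,-1)$ of $B$ vanishes; hence this condition is exactly $A=0$. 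Next I would impose $y(\upsilon+b+1)=0$ in \eqref{Main 5} with $A=0$. Since the term $s=\upsilon+b+1$ carries the factor $e_{\upsilon-\mu,\upsilon}(\alpha,-1)=0$, this condition becomes the single scalar equation
\begin{equation*}
(1-\alpha)\,e_{\upsilon-\mu,\upsilon}(\alpha,b+2)\,B=\sum_{s=\upsilon-1}^{\upsilon+b}e_{\upsilon-\mu,\upsilon}(\alpha,\upsilon+b-s)\,h(s),
\end{equation*}
as already recorded in \eqref{B 1}.

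The two conditions thus form a lower–triangular linear system in $(A,B)$ whose determinant equals $(1-\alpha)\,e_{\upsilon-\mu,\upsilon}(\alpha,b+2)$. Because $|\alpha|<1$ forces $1-\alpha\neq0$, this determinant is nonzero if and only if $e_{\upsilon-\mu,\upsilon}(\alpha,b+2)\neq0$. In that case $A=0$ and $B$ are uniquely determined, and \eqref{Main 5} returns the unique solution; this gives the ``if'' direction. For the converse, if $e_{\upsilon-\mu,\upsilon}(\alpha,b+2)=0$ I would take $h\equiv0$: then $A=0$ is still forced but $B$ remains free, so the homogeneous problem admits the nontrivial one–parameter family $y(t)=(1-\alpha)\,e_{\upsilon-\mu,\upsilon}(\alpha,t-\upsilon+1)B$, and uniqueness fails.

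The genuinely delicate point is not the endpoint bookkeeping but the bidirectional validity of \eqref{Main 5}: one must be sure that $R_{\upsilon-2}$ is injective on the relevant sequences and that every series manipulation leading to \eqref{Main 5} is reversible, so that the representation neither loses nor creates solutions. Granting the derivation already carried out, the remaining steps are routine evaluations; the only care needed is to confirm the Mittag--Leffler values at $-1$ and $0$ and that the $A$–bracket collapses to $1$, which ensures the first Dirichlet condition imposes exactly $A=0$ and not an additional spurious constraint.
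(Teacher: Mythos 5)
Your proposal is correct and follows essentially the same route as the paper: the paper's "proof" is precisely the derivation of \eqref{Main 5} via the Laplace transform, followed by imposing $y(\upsilon-2)=0$ (which forces $A=0$ since the bracket equals $\tfrac{1}{1-\alpha}-\tfrac{\alpha}{1-\alpha}=1$) and $y(\upsilon+b+1)=0$ (which determines $B$ via \eqref{B 1} exactly when $(1-\alpha)\,e_{\upsilon-\mu,\upsilon}(\alpha,b+2)\neq 0$). Your additional explicit treatment of the converse direction — exhibiting the nontrivial homogeneous solution $y(t)=(1-\alpha)\,e_{\upsilon-\mu,\upsilon}(\alpha,t-\upsilon+1)B$ when $e_{\upsilon-\mu,\upsilon}(\alpha,b+2)=0$ — is a welcome clarification of what the paper leaves implicit.
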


Denote by $$I_1 = \{(t, s) : \upsilon - 1 \leq s \leq t \leq \upsilon + b + 1\},$$ and $$I_2 = \{(t, s) : \upsilon - 1 \leq t + 1 \leq s \leq \upsilon + b\}.$$ Then, the expression for the related Green's function, when $|\alpha|<1$, is given by
\begin{equation*} \label{Greens}
G(t, s) = 
\begin{cases}
\frac{e_{\upsilon - \mu, \upsilon}(\alpha, t - \upsilon + 1)}{e_{\upsilon - \mu, \upsilon}(\alpha, b + 2)} e_{\upsilon - \mu, \upsilon}(\alpha, \upsilon + b - s) - e_{\upsilon - \mu, \upsilon}(\alpha, t - s - 1), ~ (t, s) \in I_1, \\
\frac{e_{\upsilon - \mu, \upsilon}(\alpha, t - \upsilon + 1)}{e_{\upsilon - \mu, \upsilon}(\alpha, b + 2)} e_{\upsilon - \mu, \upsilon}(\alpha, \upsilon + b - s), \hspace{1.4 in} (t, s) \in I_2.
\end{cases}
\end{equation*}

\section{Existence of Solutions of Nonlinear Problems}
In this section we will apply the following Krasnosel'skii--Zabreiko fixed point theorem to obtain nontrivial solutions of 
\begin{equation}  \label{LDE NL}
- \Delta^{\upsilon}y(t) + \alpha \Delta^{\mu}y(t + \upsilon - \mu) = f(t + \upsilon - 1, y(t + \upsilon - 1)), \quad t \in I,
\end{equation}
coupled to the boundary conditions \eqref{e-Dirichlet}. 

Here we assume that $f : [\upsilon - 1, \upsilon + b]_{\mathbb{N}_{\upsilon - 1}} \times \mathbb{R} \rightarrow \mathbb{R}$ is a continuous function.


\begin{theorem} \label{KZ}
Let $X$ be a Banach space and $F : X \rightarrow X$ be a completely continuous operator. If there exists a bounded linear operator $A : X \rightarrow X$ such that $1$ is not an eigenvalue and $$\lim_{\left\|y\right\| \rightarrow \infty} \frac{\|F(y) - A(y)\|}{\|y\|} = 0,$$ then $F$ has a fixed point in $X$.
\end{theorem}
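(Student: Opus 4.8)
The plan is to prove this by Leray--Schauder degree theory, since the statement is precisely a ``linearization at infinity'' result for which topological degree is the natural instrument. The first thing I would record is the standing observation that $A$ is in fact completely continuous: given a bounded sequence, say $\|y_n\| = 1$, and a large scalar $t$, linearity gives $\|t^{-1}F(ty_n) - A(y_n)\| = \|F(ty_n) - A(ty_n)\|/\|ty_n\|$, which the asymptotic hypothesis forces below any prescribed $\varepsilon$ uniformly in $n$ once $t$ is large; since $F$ sends the bounded set $\{ty_n\}$ into a relatively compact set, $\{A(y_n)\}$ is totally bounded, so $A$ is compact. This lets me invoke the Fredholm alternative: as $1$ is not an eigenvalue of the compact operator $A$, the map $I - A$ is injective and hence a linear homeomorphism of $X$, so $(I-A)^{-1}$ is bounded and there is a constant $c > 0$ with $\|(I-A)y\| \ge c\|y\|$ for all $y \in X$.

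Next I would introduce the homotopy $H(\lambda, y) = y - (1-\lambda)A(y) - \lambda F(y)$ for $\lambda \in [0,1]$, which joins $I - A$ at $\lambda = 0$ to $I - F$ at $\lambda = 1$ and is, at each $\lambda$, a compact perturbation of the identity. The crucial step is to check that $H(\lambda, \cdot)$ has no zero on the sphere $\|y\| = R$ once $R$ is large enough. Indeed, a zero would give $(I-A)y = \lambda\bigl(F(y) - A(y)\bigr)$, whence $c\|y\| \le \|(I-A)y\| \le \|F(y) - A(y)\|$; dividing by $\|y\| = R$ and invoking $\lim_{\|y\|\to\infty}\|F(y)-A(y)\|/\|y\| = 0$ makes the right-hand quotient strictly below $c$ for all large $R$, a contradiction. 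I expect this to be the main obstacle, as the choice of $R$ must be made uniform over $\lambda \in [0,1]$ and must simultaneously exploit the lower bound furnished by the invertibility of $I - A$ and the sublinear decay of $F - A$.

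With the homotopy shown admissible on $\partial B_R$, invariance of the Leray--Schauder degree yields $\deg(I - F, B_R, 0) = \deg(I - A, B_R, 0)$. Since $I - A$ is an invertible compact perturbation of the identity, its degree on any ball centred at the origin equals $\pm 1$, in particular it is nonzero; hence $\deg(I - F, B_R, 0) \neq 0$, and the solution property of the degree produces a point $y \in B_R$ with $F(y) = y$, which is the desired fixed point.

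Finally I would remark that the only genuine delicacy beyond the sphere estimate is the compactness of $A$ that underpins both the Fredholm alternative and the availability of the Leray--Schauder framework; if one instead takes $A$ to be completely continuous as part of the hypotheses, that point is immediate and the remainder of the argument goes through verbatim.
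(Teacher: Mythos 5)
The paper offers no proof of this statement to compare against: Theorem \ref{KZ} is the classical Krasnosel'skii--Zabreiko fixed point theorem, which the authors simply quote as a ready-made tool before applying it in Theorem \ref{KZ 1}. Your proposal therefore has to stand on its own, and it does — it is a correct rendition of the standard degree-theoretic proof of this theorem. The delicate points are all handled: (i) you correctly \emph{derive} compactness of $A$ from the hypotheses rather than assume it, by approximating $A$ uniformly on the unit sphere by the maps $y \mapsto t^{-1}F(ty)$, whose ranges are relatively compact since $F$ is completely continuous (one tacit but harmless step: compactness of $A$ on the sphere extends to the ball by the scaling $A(y)=\|y\|\,A(y/\|y\|)$); (ii) the Fredholm alternative then converts ``$1$ is not an eigenvalue'' into the coercivity bound $\|(I-A)y\| \ge c\|y\|$; (iii) at a putative zero of the homotopy you get $(I-A)y = \lambda\bigl(F(y)-A(y)\bigr)$, hence $c \le \|F(y)-A(y)\|/\|y\|$, which the sublinearity hypothesis forbids on spheres of large radius, uniformly in $\lambda \in [0,1]$; (iv) the index of the invertible compact linear perturbation $I-A$ is $\pm 1 \neq 0$, so the degree of $I-F$ is nonzero and a fixed point exists. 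One remark worth making: the degree theory can be bypassed. Once $(I-A)^{-1}$ is known to be bounded, fixed points of $F$ coincide with fixed points of $T := (I-A)^{-1}(F-A)$, which is completely continuous, and the sublinearity of $F-A$ gives $\sup_{\|y\|\le R}\|T(y)\| \le \max\bigl(M/c, R/2\bigr) \le R$ for $R$ large; Schauder's fixed point theorem then concludes. That variant uses exactly your two key ingredients (compactness of $A$, invertibility of $I-A$) but replaces the homotopy and index computation by a self-mapping estimate on a large ball.
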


We will apply Theorem \ref{KZ} to a nonlinear summation operator whose kernel is $G(t, s)$. The arguments are in the line to the ones used in \cite{henderson}. 

In this context, let the Banach space $(X, \| \cdot \|)$ be defined by
\begin{equation*} \label{X}
X : = \{h : [\upsilon - 2, \upsilon + b + 1]_{\mathbb{N}_{\upsilon - 2}} \rightarrow \mathbb{R}\},
\end{equation*}
with norm
\begin{equation*} \label{N}
\|h\| : = \max_{t \in [\upsilon - 2, \upsilon + b + 1]_{\mathbb{N}_{\upsilon - 2}}}\left|h(t)\right|.
\end{equation*}

Clearly, $y \in X$ is a fixed point of the completely continuous operator $F : X \rightarrow X$ defined by
\begin{equation*} \label{F}
\left(Fy\right)(t) : = \sum^{\upsilon + b}_{s = \upsilon - 1}G(t, s) f(s, y(s)), \quad t \in [\upsilon - 2, \upsilon + b + 1]_{\mathbb{N}_{\upsilon - 2}}.
\end{equation*}

We now apply Theorem \ref{KZ} to the operator $F$ defined in \eqref{F} and to an associated linear operator in establishing solutions of \eqref{LDE NL}--\eqref{e-Dirichlet}.

\begin{theorem} \label{KZ 1}
Assume that $|\alpha|<1$ and $f : [\upsilon - 1, \upsilon + b]_{\mathbb{N}_{\upsilon - 1}} \times \mathbb{R} \rightarrow \mathbb{R}$ is continuous and $$\lim_{\left|r\right| \rightarrow \infty} \frac{f(t + \upsilon - 1, r)}{r} = m$$ for all $t \in I$.  

If $$\left|m\right| < d : = \frac{1}{\displaystyle{\max_{t \in [\upsilon - 2, \upsilon + b + 1]_{\mathbb{N}_{\upsilon - 2}}}\sum^{\upsilon + b}_{s = \upsilon - 1}\left|G(t, s)\right|}},$$ then the boundary value problem \eqref{LDE NL}--\eqref{e-Dirichlet} has a solution $y$, and moreover, $y \not \equiv 0$ on $[\upsilon - 2, \upsilon + b + 1]_{\mathbb{N}_{\upsilon - 2}}$, when $f(t, 0) \neq 0$ for at least one  $t \in I$. 
\end{theorem}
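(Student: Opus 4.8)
The plan is to verify, for the completely continuous operator $F$, the hypotheses of the Krasnosel'skii--Zabreiko theorem (Theorem \ref{KZ}) together with a linearization of $F$ at infinity. Since the point domain $[\upsilon - 2, \upsilon + b + 1]_{\mathbb{N}_{\upsilon - 2}}$ is finite, $X$ is finite-dimensional, so the continuity of $f$ already yields that $F$ is completely continuous and that step is free. The natural candidate for the required bounded linear operator is obtained by replacing the nonlinearity $f(s, y(s))$ by its asymptotic slope $m\, y(s)$, namely
$$A(y)(t) := m\sum_{s = \upsilon - 1}^{\upsilon + b} G(t, s)\, y(s).$$

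First I would check that $1$ is not an eigenvalue of $A$. For every $y \in X$ and every $t$,
$$|A(y)(t)| \le |m| \sum_{s} |G(t, s)|\,|y(s)| \le |m|\Big(\max_{t}\sum_{s}|G(t, s)|\Big)\|y\| = \frac{|m|}{d}\,\|y\|,$$
so $\|A\| \le |m|/d < 1$ by hypothesis. Consequently $A(y) = y$ is impossible for $y \ne 0$, since it would give $\|y\| = \|A(y)\| \le \|A\|\,\|y\| < \|y\|$; hence $1$ is not an eigenvalue of $A$.

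Next I would establish the asymptotic condition $\lim_{\|y\| \to \infty} \|F(y) - A(y)\|/\|y\| = 0$. The key ingredient is a uniform-in-$s$ form of the hypothesis $\lim_{|r| \to \infty} f(t + \upsilon - 1, r)/r = m$: because $I$ is finite, for every $\varepsilon > 0$ there is $R > 0$ such that $|f(s, r) - m r| \le \varepsilon |r|$ whenever $|r| > R$, simultaneously for all admissible $s = t + \upsilon - 1$, while continuity bounds $f(s, \cdot)$ on $[-R, R]$, giving a constant $C_\varepsilon$ with $|f(s, r) - m r| \le \varepsilon |r| + C_\varepsilon$ for all $r$. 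Since $F(y)(t) - A(y)(t) = \sum_{s} G(t, s)\big[f(s, y(s)) - m\, y(s)\big]$, this produces
$$\frac{\|F(y) - A(y)\|}{\|y\|} \le \frac{1}{d}\Big(\varepsilon + \frac{C_\varepsilon}{\|y\|}\Big),$$
whose $\limsup$ as $\|y\| \to \infty$ is at most $\varepsilon/d$; letting $\varepsilon \to 0$ gives the claim. Theorem \ref{KZ} then yields a fixed point $y$ of $F$, which by the Green's function representation is a solution of \eqref{LDE NL}--\eqref{e-Dirichlet}.

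Finally, for the nontriviality I would argue that $y \equiv 0$ cannot be a fixed point when $f(\cdot, 0) \not\equiv 0$: substituting $y \equiv 0$ into \eqref{LDE NL} makes the left-hand side vanish and forces $f(t + \upsilon - 1, 0) = 0$ for all $t \in I$, contradicting the hypothesis that $f(t, 0) \ne 0$ for at least one admissible point. Hence $0$ is not a fixed point of $F$, and the fixed point produced above satisfies $y \not\equiv 0$. I expect the only genuinely delicate step to be the uniform asymptotic estimate, where one must pass from the pointwise limits at each $t$ to a single bound valid for all relevant $s$ and all $r$; everything else reduces to the operator-norm bound $\|A\| < 1$, which follows directly from the definition of $d$ and the finiteness of the domain.
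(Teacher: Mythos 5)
Your proposal is correct and follows essentially the same route as the paper: the same linearization $A(y)(t) = m\sum_{s}G(t,s)y(s)$, the same two verifications (no eigenvalue $1$, vanishing relative difference at infinity), and the same invocation of Theorem \ref{KZ}. The minor variations --- phrasing the eigenvalue step via the operator-norm bound $\|A\|\le |m|/d<1$ (which absorbs the paper's separate $m=0$ case), using the affine bound $\varepsilon|r|+C_\varepsilon$ with a $\limsup$ instead of the paper's explicit threshold $L$, and spelling out the nontriviality argument that the paper merely asserts --- are refinements of the same argument, not a different one.
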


\begin{proof}

Corresponding to \eqref{LDE NL}--\eqref{e-Dirichlet}, we consider the following linear equation 
\begin{equation}  \label{LDE L 1}
- \Delta^{\upsilon}y(t) + \alpha \Delta^{\mu}y(t + \upsilon - \mu) = m \,y(t + \upsilon - 1), \quad t \in I, 
\end{equation}
coupled to the boundary conditions \eqref{e-Dirichlet}.  We define a completely continuous linear operator $A : X \rightarrow X$ by 
\begin{equation*} \label{AA}
\left(Ay\right)(t) : = m \sum^{\upsilon + b}_{s = \upsilon - 1}G(t, s) y(s), \quad t \in [\upsilon - 2, \upsilon + b + 1]_{\mathbb{N}_{\upsilon - 2}}.
\end{equation*}
Clearly, solutions of \eqref{LDE L 1}--\eqref{e-Dirichlet} are fixed points of $A$, and conversely. 

First, we show that $1$ is not an eigenvalue of $A$. To see this, we consider two cases: (a) $m = 0$ and (b) $m \neq 0$.

For (a), if $m = 0$, since the boundary value problem \eqref{LDE L 1}--\eqref{e-Dirichlet} has only the trivial solution, it is immediately that $1$ is not an eigenvalue of $A$.

For (b), if $m \neq 0$ and \eqref{LDE L 1}--\eqref{e-Dirichlet} has a nontrivial solution, then $\left\|y\right\| > 0$. And so, we have
\begin{align*}
\left\|y\right\| & = \left\|\left(Ay\right)\right\| \\ & = \max_{t \in [\upsilon - 2, \upsilon + b + 1]_{\mathbb{N}_{\upsilon - 2}}} \left|m \sum^{\upsilon + b}_{s = \upsilon - 1}G(t, s) y(s)\right| \\ & = |m| \max_{t \in [\upsilon - 2, \upsilon + b + 1]_{\mathbb{N}_{\upsilon - 2}}} \left| \sum^{\upsilon + b}_{s = \upsilon - 1}G(t, s) y(s)\right| \\ & \leq |m| \left\|y\right\| \max_{t \in [\upsilon - 2, \upsilon + b + 1]_{\mathbb{N}_{\upsilon - 2}}}  \sum^{\upsilon + b}_{s = \upsilon - 1}\left|G(t, s)\right| \\ & < d \left\|y\right\| \frac{1}{d} \\ & = \left\|y\right\|,
\end{align*}
a contradiction. Again, $1$ is not an eigenvalue of $A$. 

Our next claim is that $$\lim_{\left\|y\right\| \rightarrow \infty} \frac{\|F(y) - A(y)\|}{\|y\|} = 0.$$ In this direction, let $\varepsilon > 0$ be given. Now, let $$\lim_{\left|r\right| \rightarrow \infty} \frac{f(t + \upsilon - 1, r)}{r} = m$$ for all $t \in I$, implies that there exists an $N_1 > 0$ such that, for $|r| > N_1$, 
\begin{equation} \label{F 1}
\left|f(t + \upsilon - 1, r) - m r\right| < \varepsilon \left|r\right|, \quad t \in I.
\end{equation}
Let $$N = \sup_{\left|r\right| \leq N_1, \; t \in I} \left|f(t + \upsilon - 1, r)\right|,$$ and let $L \geq N_1$ be such that $$\frac{N + |m| N_1}{L} < \varepsilon.$$ Next, choose $y \in X$ with $\left\|y\right\| > L$. Now, for $s \in [\upsilon - 2, \upsilon + b + 1]_{\mathbb{N}_{\upsilon - 2}}$, 
if $|y(s)| \leq N_1$, we have
\begin{align*}
\left|f(s, y(s)) - m y(s)\right|  \leq \left|f(s, y(s))\right| + \left|m\right| \left|y(s)\right|  \leq N + \left|m\right| N_1  < \varepsilon L  < \varepsilon \left\|y\right\|.
\end{align*}
On the other hand, if $|y(s)| > N_1$, we have from \eqref{F 1} that $$\left|f(s, y(s)) - m y(s)\right| < \varepsilon |y(s)| \leq \varepsilon \left\|y\right\|.$$
Thus, for $s \in [\upsilon - 2, \upsilon + b + 1]_{\mathbb{N}_{\upsilon - 2}}$, 
\begin{equation} \label{F 2}
\left|f(s, y(s)) - m y(s)\right| \leq \varepsilon \left\|y\right\|.
\end{equation}
It follows from \eqref{F 2} that, for $y \in X$ with $\left\|y\right\| > L$,
\begin{align*}
\|F(y) - A(y)\| & = \max_{t \in [\upsilon - 2, \upsilon + b + 1]_{\mathbb{N}_{\upsilon - 2}}} \left|\sum^{\upsilon + b}_{s = \upsilon - 1}G(t, s) [f(s, y(s)) - m y(s)]\right| \\ & \leq \max_{t \in [\upsilon - 2, \upsilon + b + 1]_{\mathbb{N}_{\upsilon - 2}}}  \sum^{\upsilon + b}_{s = \upsilon - 1}\left|G(t, s)\right| \left|f(s, y(s)) - m y(s)\right| \\ & \leq \varepsilon \left\|y\right\| \max_{t \in [\upsilon - 2, \upsilon + b + 1]_{\mathbb{N}_{\upsilon - 2}}}  \sum^{\upsilon + b}_{s = \upsilon - 1}\left|G(t, s)\right| \\ & = \varepsilon \left\|y\right\| \frac{1}{d}.
\end{align*}
Therefore, $$\lim_{\left\|y\right\| \rightarrow \infty} \frac{\|F(y) - A(y)\|}{\|y\|} = 0.$$ By Theorem \ref{KZ}, $F$ has a fixed point $y \in X$, and $y$ is a desired solution of \eqref{LDE NL}, \eqref{e-Dirichlet}. Moreover $y \not \equiv 0$ on $[\upsilon - 2, \upsilon + b + 1]_{\mathbb{N}_{\upsilon - 2}}$, when $f(t, 0) \neq 0$ for at least one  $t \in I$ and the proof is complete.
\end{proof}

Let us recall the following theorem
\begin{theorem} \cite{Ag} (Leray--Schauder Nonlinear Alternative) \label{LS}
Let $(E, \|\cdot\|)$ be a Banach space, $K$ be a closed and convex subset of $E$, $U$ be a relatively open subset of $K$ such that $0 \in U$, and $T : \bar{U} \rightarrow K$ be completely continuous. Then, either
\begin{enumerate}
\item[(i)] $u = Tu$ has a solution in $\bar{U}$; or
\item[(ii)] There exist $u \in \partial U$ and $\lambda \in (0, 1)$ such that $u = \lambda Tu$.
\end{enumerate}
\end{theorem}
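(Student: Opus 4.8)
The plan is to derive this Nonlinear Alternative from Schauder's fixed point theorem via a Urysohn-type separation argument. I would argue by contraposition: assume conclusion (ii) fails, and suppose in addition that $T$ has no fixed point in $\bar{U}$ (for if it does, (i) already holds); the goal is to reach a contradiction, which then forces (i).

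The first step is to isolate the set of ``partial fixed points''
\[
A := \{u \in \bar{U} : u = \lambda\, Tu \text{ for some } \lambda \in [0,1]\}.
\]
I would check that $A$ is nonempty, since $0 \in A$ via $\lambda = 0$ (recall $0 \in U \subseteq \bar{U}$), and that $A$ is closed: given $u_n = \lambda_n Tu_n$ with $u_n \to u$, compactness of $[0,1]$ lets me extract $\lambda_{n} \to \lambda$, and continuity of $T$ yields $u = \lambda\, Tu$. The decisive observation is that, under the standing assumptions, $A \cap \partial U = \emptyset$. Indeed, a point $u = \lambda\, Tu$ lying in $\partial U$ cannot have $\lambda = 0$ (else $u = 0 \in U$, which is disjoint from $\partial U$ because $U$ is relatively open), cannot have $\lambda = 1$ (else $u = Tu$ is a fixed point in $\bar{U}$, excluded), and cannot have $\lambda \in (0,1)$ (this is precisely case (ii), excluded).

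Since $K$ is a metric space and $A$, $\partial U$ are disjoint closed subsets of it, Urysohn's lemma furnishes a continuous $\psi : \bar{U} \to [0,1]$ with $\psi \equiv 1$ on $A$ and $\psi \equiv 0$ on $\partial U$; concretely one may take $\psi(u) = \operatorname{dist}(u, \partial U)/\bigl(\operatorname{dist}(u, A) + \operatorname{dist}(u, \partial U)\bigr)$. I would then define a modified map on all of $K$ by
\[
N(u) := \begin{cases} \psi(u)\, Tu, & u \in \bar{U}, \\ 0, & u \in K \setminus \bar{U}, \end{cases}
\]
and verify that $N$ is continuous (the two formulas agree along $\partial U$, where $\psi = 0$) and completely continuous, with range contained in $\overline{\operatorname{co}}\bigl(T(\bar{U}) \cup \{0\}\bigr)$. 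The latter is a compact convex subset of $K$: it lies in $K$ because $K$ is convex with $0 \in K$ and $T(\bar{U}) \subseteq K$, and it is compact by Mazur's theorem together with the relative compactness of $T(\bar{U})$.

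The concluding step applies Schauder's fixed point theorem to $N$ on that compact convex set, producing $u^* = N(u^*)$. I would then trap $u^*$ inside $\bar{U}$: if instead $u^* \in K \setminus \bar{U}$, then $N(u^*) = 0$ forces $u^* = 0 \in U$, a contradiction. Hence $u^* = \psi(u^*)\, Tu^*$, so $u^* \in A$ by definition, which gives $\psi(u^*) = 1$ and therefore $u^* = Tu^*$ — a genuine fixed point in $\bar{U}$, contradicting the assumption that none exists. This contradiction establishes that (i) holds whenever (ii) does not. The main obstacle, requiring the most care, is the construction and verification of $N$: guaranteeing continuity across $\partial U$, confining its range to a single compact convex subset of $K$ so that Schauder's theorem is applicable, and ensuring the Schauder fixed point is captured within $\bar{U}$ rather than escaping into the region where $N \equiv 0$.
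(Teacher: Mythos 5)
The paper offers no proof of this theorem --- it is quoted with a citation to \cite{Ag} --- so there is no internal proof to compare against; your argument is, in essence, the standard proof given in that reference (and in Granas--Dugundji): isolate the partial-fixed-point set $A$, separate it from $\partial U$ by a Urysohn-type cutoff $\psi$, extend $N=\psi T$ by $0$ off $\bar{U}$, apply Schauder on the compact convex set $\overline{\mathrm{co}}\left(T(\bar{U})\cup\{0\}\right)$, and trap the fixed point in $\bar{U}$, and all of these steps are carried out correctly. One caveat: your appeal to the ``relative compactness of $T(\bar{U})$'' silently reads ``completely continuous'' as ``compact map'' (continuous with $T(\bar{U})$ relatively compact), which is indeed how \cite{Ag} states the hypothesis; under the literal bounded-sets definition of complete continuity and an unbounded $U$ the statement as quoted actually fails --- take $E=K=U=\mathbb{R}$ and $Tx=x+1$, so that $\partial U=\emptyset$ and both alternatives are violated --- so you should either assume $\bar{U}$ bounded (harmless here, since the paper applies the theorem with $U$ a ball of radius $L$) or state the compactness hypothesis explicitly. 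Relatedly, your explicit formula $\psi(u)=\operatorname{dist}(u,\partial U)/\left(\operatorname{dist}(u,A)+\operatorname{dist}(u,\partial U)\right)$ degenerates in the edge case $\partial U=\emptyset$ (i.e., $U$ clopen in $K$), where one should simply set $\psi\equiv 1$ and note the argument goes through; this is a one-line fix, not a structural flaw.
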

Our second main result in this paper is as follows.
\begin{theorem} \label{LS 1}
Assume that $|\alpha|<1$ and that the following properties are fulfilled:
\begin{enumerate}
\item[(A1)] There exist a nondecreasing function $\psi : [0, \infty) \rightarrow [0, \infty)$ and $g : [\upsilon - 2, \upsilon + b + 1]_{\mathbb{N}_{\upsilon - 2}} \rightarrow [0, \infty)$ such that $$\left|f(t + \upsilon - 1, r)\right| \leq g(t + \upsilon - 1) \psi\left(|r|\right), \quad t \in I, \quad r \in \mathbb{R}.$$
\item[(A2)] There exists $L > 0$ such that $$\displaystyle\frac{L}{\psi\left(L\right) {\displaystyle\max_{t \in [\upsilon - 2, \upsilon + b + 1]_{\mathbb{N}_{\upsilon - 2}}}\sum^{\upsilon + b}_{s = \upsilon - 1}g(s)\,\left|G(t, s)\right|}} > 1.$$
\end{enumerate}
Then, the boundary value problem \eqref{LDE NL}--\eqref{e-Dirichlet} has a solution defined on $[\upsilon - 2, \upsilon + b + 1]_{\mathbb{N}_{\upsilon - 2}}$. Moreover, $y \not \equiv 0$ on $[\upsilon - 2, \upsilon + b + 1]_{\mathbb{N}_{\upsilon - 2}}$, when $f(t, 0) \neq 0$ for at least one  $t \in I$. 
\end{theorem}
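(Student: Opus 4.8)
The plan is to apply the Leray--Schauder Nonlinear Alternative (Theorem \ref{LS}) to the same completely continuous operator $F$ introduced in \eqref{F}, whose fixed points are, by the Green's function construction, exactly the solutions of \eqref{LDE NL}--\eqref{e-Dirichlet}. I would take $E = K = X$ and let $U = \{y \in X : \|y\| < L\}$, where $L > 0$ is the constant furnished by hypothesis (A2). Then $U$ is a relatively open, convex neighbourhood of $0$ in $K$, the set $\bar{U}$ is bounded, and $F : \bar{U} \to X = K$ is completely continuous, so all hypotheses of Theorem \ref{LS} are satisfied. It therefore suffices to exclude the second alternative, which forces conclusion (i), namely the existence of a fixed point of $F$ in $\bar{U}$.

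To rule out alternative (ii), I would argue by contradiction: suppose there were $y \in \partial U$ and $\lambda \in (0,1)$ with $y = \lambda F y$. Since $\|y\| = L$, we have $|y(s)| \le L$ for every $s$, and because $\psi$ is nondecreasing, (A1) yields $|f(s, y(s))| \le g(s)\,\psi(|y(s)|) \le g(s)\,\psi(L)$. Hence, for each $t \in [\upsilon-2, \upsilon+b+1]_{\mathbb{N}_{\upsilon-2}}$,
\[
|y(t)| = \lambda\,|(Fy)(t)| \le \lambda \sum_{s=\upsilon-1}^{\upsilon+b} |G(t,s)|\,|f(s,y(s))| < \psi(L)\sum_{s=\upsilon-1}^{\upsilon+b} g(s)\,|G(t,s)|,
\]
where the strict inequality uses $\lambda < 1$. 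Taking the maximum over $t$ gives $L = \|y\| \le \psi(L)\,\max_{t}\sum_{s} g(s)\,|G(t,s)|$, that is, $L/[\psi(L)\,\max_{t}\sum_{s} g(s)\,|G(t,s)|] \le 1$, which contradicts (A2). Therefore alternative (ii) cannot occur.

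Consequently alternative (i) holds, so $F$ admits a fixed point $y \in \bar{U}$, which is the sought solution of \eqref{LDE NL}--\eqref{e-Dirichlet} defined on $[\upsilon-2, \upsilon+b+1]_{\mathbb{N}_{\upsilon-2}}$. For the final assertion, I would note that if the obtained solution satisfied $y \equiv 0$, then substituting into \eqref{LDE NL} and using $\Delta^{\upsilon} 0 = \Delta^{\mu} 0 = 0$ would force $f(t+\upsilon-1, 0) = 0$ for all $t \in I$, contradicting the assumption that $f(t,0) \neq 0$ for at least one $t \in I$; hence $y \not\equiv 0$.

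I expect the only delicate point to be the book-keeping in the a priori estimate---combining the strict inequality coming from $\lambda < 1$ with the monotonicity of $\psi$ and the passage to the maximum over $t$ so that the resulting bound is exactly the reciprocal of the quantity appearing in (A2). The structural ingredients (complete continuity of $F$, the identification of fixed points with solutions, and the choice of $U$) are already available from the preceding discussion, so they require no further work.
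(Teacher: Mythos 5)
Your proposal is correct and follows essentially the same route as the paper's proof: the Leray--Schauder alternative (Theorem \ref{LS}) applied to the operator $F$ on $U = \{y \in X : \|y\| < L\}$, with the a priori estimate from (A1)--(A2) ruling out alternative (ii), and the nontriviality conclusion obtained by substituting $y \equiv 0$ into the equation. The only cosmetic differences are that the paper re-verifies the complete continuity of $F$ inside the proof (trivial here, since $X$ is finite-dimensional), and that your intermediate strict inequality (which could fail in the degenerate case where both sides vanish) is not actually needed, because the non-strict bound you invoke after taking the maximum over $t$ already contradicts the strict inequality in (A2).
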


\begin{proof}
We first show that $F$ maps bounded sets into bounded sets. For this purpose, for $r > 0$, let $$B_r = \{y \in X : \|y\| \leq r\},$$ be a bounded subset of $X$. Then, by (A1), for $y \in B_r$, 
\begin{align*}
\left|\big{(}Fy\big{)}(t)\right| & \leq \sum^{\upsilon + b}_{s = \upsilon - 1}\left|G(t, s)\right| \left|f(s, y(s))\right| \leq \psi\left(\|y\|\right) \sum^{\upsilon + b}_{s = \upsilon - 1}g(s)\left|G(t, s)\right|,
\end{align*}
implying that $$\left|\big{(}Fy\big{)}(t)\right| \leq \psi\left(r\right) \sum^{\upsilon + b}_{s = \upsilon - 1}g(s)\left|G(t, s)\right|.$$ Thus, $F$ maps $B_r$ into a bounded set. Since $[\upsilon - 2, \upsilon + b + 1]_{\mathbb{N}_{\upsilon - 2}}$ is a discrete set, it follows immediately that $F$ maps $B_r$ into an equicontinuous set. Therefore, by the Arzela--Ascoli theorem, $F$ is completely continuous. Next, suppose that $y \in X$ and $y = \lambda F y$ for some $0 < \lambda < 1$. Then, from (A1), for $t \in [\upsilon - 2, \upsilon + b + 1]_{\mathbb{N}_{\upsilon - 2}}$, we have
\begin{align*}
\left|y(t)\right| = \left|\lambda \big{(}Fy\big{)}(t)\right| \leq \sum^{\upsilon + b}_{s = \upsilon - 1}\left|G(t, s)\right| \left|f(s, y(s))\right|  \leq \psi\left(\|y\|\right) \sum^{\upsilon + b}_{s = \upsilon - 1}g(s)\left|G(t, s)\right|,
\end{align*}
implying that $$\displaystyle\frac{\|y\|}{\psi\left(\|y\|\right) \displaystyle \sum^{\upsilon + b}_{s = \upsilon - 1}g(s)\left|G(t, s)\right|} \leq 1.$$ It follows from (A2) that $\|y\| \neq L$. If we set $$U = \Big\{y \in X : \|y\| < L \Big\},$$ then the operator $F : \bar{U} \rightarrow X$ is completely continuous. From the choice of $U$, then there is no $y \in \partial U$ such that $y = \lambda F y$ for some $0 < \lambda < 1$. It follows from Theorem \ref{LS} that $F$ has a fixed point $y_0 \in \bar{U}$, which is a desired solution of \eqref{LDE NL}--\eqref{e-Dirichlet}. 

Obviously, if $f(t, 0) \not \equiv 0$ on $[\upsilon - 2, \upsilon + b + 1]_{\mathbb{N}_{\upsilon - 2}}$, we have that $y \not \equiv 0$ on $[\upsilon - 2, \upsilon + b + 1]_{\mathbb{N}_{\upsilon - 2}}$.
\end{proof}

As a direct consequence of the previous result, we deduce the following corollary:

\begin{corollary} \label{Cor-1}
	Assume that $|\alpha|<1$ and condition $(A1)$ in Theorem \ref{LS 1} holds. Then, if  
	$$\displaystyle\lim_{L \to +\infty}\frac{L}{\psi\left(L\right)}=+\infty$$
the boundary value problem \eqref{LDE NL}--\eqref{e-Dirichlet} has a solution defined on $[\upsilon - 2, \upsilon + b + 1]_{\mathbb{N}_{\upsilon - 2}}$. Moreover, $y \not \equiv 0$ on $[\upsilon - 2, \upsilon + b + 1]_{\mathbb{N}_{\upsilon - 2}}$, when $f(t, 0) \neq 0$ for at least one  $t \in I$. 
\end{corollary}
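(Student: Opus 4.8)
The plan is to recognize Corollary \ref{Cor-1} as a direct specialization of Theorem \ref{LS 1}: condition (A1) is assumed outright, so the only work is to manufacture, from the growth hypothesis on $\psi$, a single constant $L > 0$ witnessing condition (A2). Once such an $L$ is exhibited, both hypotheses of Theorem \ref{LS 1} hold and its conclusion---including the nontriviality assertion when $f(t, 0) \neq 0$ for some $t \in I$---applies verbatim.

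First I would fix the constant
$$M := \max_{t \in [\upsilon - 2, \upsilon + b + 1]_{\mathbb{N}_{\upsilon - 2}}} \sum^{\upsilon + b}_{s = \upsilon - 1} g(s)\,\left|G(t, s)\right|,$$
and check that it is a well-defined nonnegative real number. For each fixed $t$ the inner sum runs over the finite index set $s \in [\upsilon - 1, \upsilon + b]_{\mathbb{N}_{\upsilon - 1}}$; each value $G(t, s)$ is finite because the assumption $|\alpha| < 1$ guarantees, via Remark \ref{r-convergence}, the convergence of every Mittag--Leffler series entering the expression of $G$; and the outer maximum is taken over the finite set $[\upsilon - 2, \upsilon + b + 1]_{\mathbb{N}_{\upsilon - 2}}$. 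Hence $M \in [0, \infty)$.

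With this notation, condition (A2) asks precisely for some $L > 0$ with $\frac{L}{\psi(L)\,M} > 1$, equivalently $\frac{L}{\psi(L)} > M$. I would then invoke the hypothesis $\lim_{L \to +\infty} L/\psi(L) = +\infty$: by the definition of an infinite limit there is an $L_0$ beyond which $L/\psi(L) > M$, and any $L \geq L_0$ serves as the required witness. The only caveat is the degenerate case $M = 0$, which by (A1) forces $g \equiv 0$ and hence $f \equiv 0$; there the inequality in (A2) holds trivially and the problem admits the zero solution, consistent with the statement since the nontriviality clause is conditioned on $f(t, 0) \neq 0$.

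Having verified both (A1) and (A2), the final step is simply to quote Theorem \ref{LS 1}, which yields a solution on $[\upsilon - 2, \upsilon + b + 1]_{\mathbb{N}_{\upsilon - 2}}$ together with the nontriviality conclusion. I do not anticipate a genuine obstacle here: the entire content is the elementary translation of the divergence of $L/\psi(L)$ into the existence of a suitable threshold $L$, which is exactly why the result is packaged as a corollary rather than a standalone theorem.
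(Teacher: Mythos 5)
Your proof is correct and follows exactly the route the paper intends: the paper states the corollary without proof as a ``direct consequence'' of Theorem \ref{LS 1}, and your argument --- finiteness of the constant $M$ (finite sums over finite index sets, with finite Green's function values guaranteed by $|\alpha|<1$), then choosing $L$ large enough that $L/\psi(L) > M$ to witness (A2) --- is precisely that implicit argument, with the added care of handling the degenerate case $M=0$.
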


\section{Examples}
In this section, we provide two example to demonstrate the applicability of Theorems \ref{KZ 1} and \ref{LS 1}.

\begin{example}
Consider the boundary value problem \eqref{LDE NL}, \eqref{e-Dirichlet} with $a = 0$, $b = 5$, $\upsilon = 1.5$, $\mu = 0.5$, $\alpha = 0.5$ and 
$$f(t + \upsilon - 1, r) =  \frac{r}{3\pi} \left|\tan^{-1}\left((t + \upsilon - 1)^2 (r+1)^3\right)\right|+e^{(t + \upsilon - 1)^2}\,\sqrt{|r+1|}.$$

 Clearly, $$m = \lim_{\left|r\right| \rightarrow \infty} \frac{f(t + \upsilon - 1, r)}{r} = \frac{1}{6}\qquad \mbox{ for all $t \in I$.}$$
 
  The Green's function associated with the boundary value problem is given by 
\begin{equation} \label{Greens}
G(t, s) = 
\begin{cases}
\frac{e_{1, 1.5}(0.5, t - 0.5)}{e_{1, 1.5}(0.5, 7)} e_{1, 1.5}(0.5, 6.5 - s) - e_{1, 1.5}(0.5, t - s - 1), ~ (t, s) \in I_1, \\
\frac{e_{1, 1.5}(0.5, t - 0.5)}{e_{1, 1.5}(0.5, 7)} e_{1, 1.5}(0.5, 6.5 - s), \hspace{1.4 in} (t, s) \in I_2,
\end{cases}
\end{equation}
where $$I_1 = \{(t, s) : 0.5 \leq s \leq t \leq 7.5\},$$ and $$I_2 = \{(t, s) : 0.5 \leq t + 1 \leq s \leq 6.5\}.$$ Since $$d = \displaystyle\frac{1}{\displaystyle\max_{t \in [-0.5, 7.5]_{\mathbb{N}_{-0.5}}}\sum^{6.5}_{s = 0.5}\left|G(t, s)\right|} = 0.241342 > |m|,$$ by Theorem \ref{KZ 1}, the boundary value problem has a non trivial solution defined on $[-0.5, 7.5]_{\mathbb{N}_{-0.5}}$.
\end{example}

\begin{example}
Consider the boundary value problem \eqref{LDE NL}, \eqref{e-Dirichlet} with $a = 0$, $b = 5$, $\upsilon = 1.5$, $\mu = 0.5$, $\alpha = 0.5$ and $f(t + \upsilon - 1, r) = (t + \upsilon - 1)\,\sqrt[4]{|r|^3+t + \upsilon - 1}$. Clearly, $$\left|f(t + \upsilon - 1, r)\right| \leq g(t + \upsilon - 1)\psi\left(|r|\right), \quad t \in I, \quad r \in \mathbb{R},$$ where $$g(t + \upsilon - 1) = t + \upsilon - 1, \quad t \in I,$$ and $$\psi\left(|r|\right) = \sqrt[4]{|r|^3+b + \upsilon}, \quad r \in \mathbb{R}.$$ Also, $g : [\upsilon - 2, \upsilon + b + 1]_{\mathbb{N}_{\upsilon - 2}} \rightarrow [0, \infty)$ and $\psi : [0, \infty) \rightarrow [0, \infty)$ is a nondecreasing function. Thus, the assumption (A1) of Theorem \ref{LS 1} holds. 

Now, since 
$$\displaystyle\lim_{L \to +\infty}\frac{L}{\psi\left(L\right)}=+\infty,$$
from Corollary \ref{Cor-1}, we have that the considered problem has at least a nontrivial solution $y$.

Notice that, in this case, we can estimate the minimum value of the parameter $L$ as 74,395.4. We point out that, from the proof of Theorem \ref{LS 1}, this value of $L$ is the better a priori bound that we have for $\|y\|$.
\end{example}
 
\noindent{\bf Acknowledgments} {First author is partially supported by Xunta de Galicia (Spain), project EM2014/032 and AIE, Spain and FEDER, grant PID2020-113275GB-I00. The second author is supported by Project FNSE-03.}

\end{document}